\setlist[enumerate]{leftmargin=.5in}
\setlist[itemize]{leftmargin=.5in}
\crefname{hypothesis}{Hypothesis}{Hypotheses}
\title{Augmented Newton Method for Optimization: Global Linear Rate and Momentum Interpretation}
\author{Md Sarowar Morshed \thanks{Independent Researcher
  (\email{riponsarowar@outlook.com}, \url{https://morshed.netlify.app/}).}}
\newcommand{\R}{\mathbb{R}}
\DeclareMathOperator*{\argmin}{arg\,min}
\DeclarePairedDelimiterX{\inp}[2]{\langle}{\rangle}{#1, #2}
\newtheorem{assumption}{Assumption}
\newcommand{\eqdef}{\coloneqq} 
\begin{document}

\maketitle

\begin{abstract}
We propose two variants of Newton method for solving unconstrained minimization problem. Our method leverages optimization techniques such as penalty and augmented Lagrangian method to generate novel variants of the Newton method namely the \textit{Penalty Newton} method and the \textit{Augmented Newton} method. In doing so, we recover several well-known existing Newton method variants such as \textit{Damped Newton}, \textit{Levenberg}, and \textit{Levenberg-Marquardt} methods as special cases. Moreover, the proposed \textit{Augmented Newton} method can be interpreted as Newton method with adaptive heavy ball momentum. We provide global convergence results for the proposed methods under mild assumptions that hold for a wide variety of problems. The proposed methods can be sought as the penalty and augmented extensions of the results obtained by Karimireddy \textit{et. al} \cite{karimireddy2018global}. 
\end{abstract}

\begin{keywords}
Newton Method,  Generalized Linear Model, Augmented Newton Method, Penalty Newton Method, Penalty Method, Augmented Lagrangian, Method of Multipliers, Nonlinear Optimization.
\end{keywords}

\begin{AMS}
49M15, 49M37, 58C15, 65K05, 65K10, 65Y20, 68Q25, 90C06, 90C30, 90C51.
\end{AMS}

\section{Introduction}
\label{als:an:sec:intro}
In this work, we consider solving the following unconstrained minimization problem:
\begin{equation}\label{als:an:1} \min_{x\in \R^n} f(x),
\end{equation}
where $f:\R^n\to \R$ is a well behaved function. Newton's method is one of the most important method for solving the above optimization problem. Although, Newton's method is originally proposed in the context of finding roots of polynomial equations, from the last century it has been recognised as one of the fundamental second order methods for solving optimization problems. In the following, we discuss the historical developments of the Newton method as well as the modern developments of Newton method variants.

\paragraph{Historical developments} The first occurrence of Newton type method can be traced back to the work of Persian astronomer and mathematician Al-Kashi (1380–1429). In his seminal work  \textit{'The Key to Arithmetic'} published in 1427, he discussed a variant of Newton method that was based on the earlier works by the famous polymath Al-Biruni (973–1048) and mathematician Al-Tusi (1135-1213). The work of Al-Kashi remained hidden to the western scientific community until the work of Francois Vieta (1540-1603). In 1600, Vieta rediscovered a similar technique like Al-Kashi's in the context of solving scalar polynomial equations of degree six \cite{Historical}. Vieta's method gained considerable attention by the work of Sir Isaac Newton (1643–1727) who improved Vieta's method immensely and developed a closely related method \cite{British} in his seminal works titled \textit{'De analysi per aequationes numero terminorum infinitas'} (1669, published in 1711) and \textit{'De metodis fluxionum et serierum infinitarum'} (1671, published in 1736). Based on the work of Newton, Joseph Raphson (1648-1715) in 1690 proposed a closely related method for finding roots of polynomial equations. However, the connection between Newton method and function derivative was unknown until the later work of Thomas Simpson (1710-1761). In 1740, Simpson proposed a closely related variant of Newton method which was the earliest method that resembles the modern Newton method most \cite{Cajori}.

During early 1900s, the works of Bennet \cite{Bennett592} and Kantarovich \cite{Kantorovich} are the driving force for the popularization of the Newton method as an optimization algorithm. In his seminal work \cite{Kantorovitch1939} Kantarovich was the first one to prove local linear convergence of the Newton method for solving operator equation. This prompted several convergence proofs of variants of Newton method under various assumptions (for a detailed discussion on this issue we refer interested reader to the work of Ortega \textit{et. al} \cite{Ortega}). In the 1980s, Newton's scheme has found applications in various nonlinear optimization algorithm settings such as \textit{Augmented Lagrangian} \cite{BERTSEKAS}, and \textit{Interior Point Methods} (IPMs) \cite{selfc}. In that time, Nesterov and Nemirovski \cite{selfc} proved that under the condition of 'self-concordance' Newton method achieves quadratic convergence rate locally.

\paragraph{Modern developments} In the last decade, there has been a surge of work developing convergence results of variants of Newton method (for a detailed discussion please see \cite{Nesterov2006} and the references therein). At that time, most of the global convergence results either used strong assumptions on the function $f(x)$ or had incomparable convergence rates \footnote{One can obtain similar type of global convergence rates for the inexact type Newton method \cite{Scheinberg2016,Lee2019}.} with respect to the \textit{Gradient Descent} (GD) algorithm. Moreover, some variants have slower rates compared to the vanilla GD algorithm proposed in \cite{Gurbuzbalaban2015}. The first breakthrough result in terms of convergence result has been obtained by Nesterov and Polyak. In their seminal work \cite{Nesterov2006}, they obtained $O(1/k^2)$ convergence rate. Subsequently after that Nesterov achieved $O(1/k^3)$ rate using the so-called cubic regularization technique \cite{Nesterov2008}. The main ingenuity of these works is that, these rates do not require either the strong convexity or the Lipschitz smoothness conditions which were the state-of-the-art assumptions \cite{Polyak2006,Lee2019} of that time. The main disadvantage of these cubic variants is that the cubic subproblem that is the building block of these methods is hard to solve and often times impractical for huge dimensional problems. In their recent work, Karireddy \textit{et. al} provided the convergence results of the Newton method and its trust region variants under the $c$--stability condition \cite{karimireddy2018global}. They showed that the $c$--stability condition is weaker than the standard Lipschitz smoothness, and strong convexity conditions and furthermore $c$--stability is much more weaker than the 'self-concordance' assumption of Nesterov and Nemirovski \cite{selfc}. Recently it has been shown that convergence of some variants of Newton's method can be achieved without smoothness or convexity \cite{roosta2019newtonmr}.

Most modern variants of the Newton method can be categorized into three special types such as \emph{Subsampled Newton}, \emph{Cubic Newton}, and \emph{Subspace Newton} methods. Subsampled Newton methods are stochastic second order algorithms designed mostly to tackle the finite sum minimization problem. Over the years, several variants such as \emph{Subsampled Newton} methods\cite{byrd2011, erdogdu2015, xu2016, Roosta2019} and \emph{Subsampled Cubic Newton} methods \cite{kohler2017, Xu2020, wang2018} and \emph{Stochastic Cubic Newton} methods \cite{tripuraneni2018,Cartis2018, kovalev2019stochastic} have been proposed. These methods make use of the Hessian sketching technique \cite{pilanci2017}. Cubic Newton method make use of the cubic regularization of the Taylor approximation of the function $f(x)$. In their work \cite{Nesterov2006}, Nesterov \textit{et. al} established the first global complexity results of the Cubic Newton method. After their work surprising result, research boomed in developing variants of cubic Newton method such as accelerated \cite{Nesterov2008,monteiro2013}, adaptive \cite{cartis2011adaptive1,cartis2011adaptive2}, block \cite{doikov2018}, and universal \cite{grapiglia2017,grapiglia2019,doikov2019} cubic Newton methods. Subspace Newton methods are the type of stochastic algorithms that use random projections of the Hessian matrix. These types of methods are of the form $x_{k+1} = x_k + S h_k$, where $S$ is a random sketch matrix and $h_k$ is the sketched Newton direction. Several available variants of these types are Stochastic Dual Newton Ascent \cite{qu:2016}, Randomized Subspace Newton \cite{gower2019rsn}, Stochastic Subspace Cubic Newton \cite{hanzely2020}, and Sketched Newton-Raphson \cite{yuan2020sketched}.

\subsection{Newton Method}
\label{als:an:subsec:newton}
The full Newton method for solving problem \eqref{als:an:1} can be expressed as the following update formula:
\begin{align}
\label{als:an:2}
x_{k+1} = x_k -  \left(\nabla^2 f(x_k) \right)^\dagger \nabla f(x_k).
\end{align}
A more general form of the update is the so-called \textit{Damped Newton} (DN) method that can expressed as follows:
\begin{align}
\label{als:an:3}
x_{k+1} = x_k - t  \left(\nabla^2 f(x_k) \right)^\dagger \nabla f(x_k).
\end{align}
An standard way of choosing step size $t$ is by backtracking line search which goes as follows: start with $t = 1$ and $0 < \alpha \leq 1/2$, $0 < \beta < 1$ and check the following:
\begin{align*}
    f(x_{k+1}) > f(x_k) - \alpha t \nabla f(x_k)^{\top} \left(\nabla^2 f(x_k) \right)^\dagger \nabla f(x_k).
\end{align*}
The quantity $\left(\nabla f(x_k)^{\top} \left(\nabla^2 f(x_k) \right)^\dagger \nabla f(x_k) \right)^{1/2}$ is known as the \textit{Newton Decrement}. It is well-known that the above method with backtracking line search achieve quadratic convergence rate under some nice assumptions on function $f(x)$.

\subsection{Contributions}
\label{als:an:subsec:contr}
In this paper, we make the following fundamental contributions:
\begin{itemize}
    \item \textbf{New variants of Newton method.} We propose two variants of Newton method namely the \textit{Penalty Newton Method} (PNM) and \textit{Augmented Newton Method} (ANM) by incorporating the Lagrangian penalty function based techniques on the Newton system \eqref{als:an:17}.
    \item \textbf{Special cases.} From our proposed algorithms we recover well-known algorithms such as \textit{Levenberg} and \textit{Levenberg-Marquardt} algorithms as special cases. 
    \item \textbf{Momentum interpretation.} Moreover, the proposed ANM algorithm can be interpreted as Newton method with adaptive heavy ball momentum.
    \item \textbf{Global linear rate.} We analyze and obtain linear convergence results for the proposed methods under weaker assumptions on the function $f(x)$. For instance, we proved convergence results without imposing the strong convexity and Lipschitz continuity conditions on the function $f(x)$.
\end{itemize}
Furthermore, we analyze and compare the efficiency of the proposed methods to the Newton method for solving \textit{Generalized Linear Models}. The proposed PNM and ANM algorithms bear resemblance to the recently proposed penalty and augmented Kaczmarz methods for solving linear systems and linear feasibility problems \cite{als:kac}.

\section{Notations, Assumptions and Preliminary Results}
\label{als:an:sec:not}

In this section, we introduce some standard notations and definitions that will be used throughout the paper. We also provide necessary assumptions that are crucial to the convergence analysis of the proposed methods. At the end of this section, we mention some technical results most of those are borrowed from the literature.

\subsection{Notations \& Definitions}
For any matrix $A$, by $A^{\dagger}$, $\textbf{Range}(A)$, $\textbf{Null}(A)$,  $\lambda_{\min}^+(A)$, $\lambda_{\max}(A)$, $\sigma_{\min}^+(A)$, $\sigma_{\max}(A)$  we denote the Moore-Penrose pseudo-inverse, range, null, the smallest nonzero eigenvalue, the largest eigenvalue of matrix $A$, the smallest nonzero singular value, the largest singular value of matrix $A$, respectively. For any positive definite matrix $B \in \R^{n \times n}$, we denote $\langle x, B x \rangle = x^{\top}Bx= \|x\|_B^2 $. We denote the optimal value of \eqref{als:an:1} as $f^* = f(x^*)$, where, $x^*$ is the optimal solution. We denote $\nabla f(x)$ and $\nabla^2 f(x) = \mathbf{H}(x)$ as the gradient and Hessian of $f$ at $x$, respectively. Let, $G \in \R^{n \times n}$ be a positive definite matrix, then for any $x \in \R^n$ let us define the following matrices:
\begin{align}
   &  \mathbf{K}(x) = \left(\frac{1}{\rho} G +  \mathbf{H}(x) \right)^{-1},  \ \  \mathbf{L}(x) = \mathbf{H}^{\frac{1}{2}} (x) \left(\frac{1}{\rho} I+ \mathbf{H}^{\frac{1}{2}} (x)  G^{-1}  \mathbf{H}^{\frac{1}{2}} (x) \right)^{-1} \mathbf{H}^{\frac{1}{2}} (x). \label{als:an:4}
\end{align}
We denote the set $\mathcal{Q}_{\rho}$ as the level set of function $f(x) + \frac{L}{2 \rho} \|x-y\|^2_G$ associated with initial iterates $x_0$ and $ y_0$ as follows:
\begin{align}
\label{als:an:6}
    \mathcal{Q}_{\rho} := \{x, y \in \R^n \ : \ f(x) + \frac{L}{2 \rho} \|x-y\|^2_G \leq f(x_0) + \frac{L}{2 \rho} \|x_0-y_0\|^2_G\}.
\end{align}

\subsection{Assumptions}
\label{als:an:subsec:assum}

Throughout the paper, we assume $f: \R^n \rightarrow \R$ is convex, twice  differentiable and bounded from below. Also, we assume the set of minimizers of problem \eqref{als:an:1} is nonempty. Furthermore, we assume the following two assumptions hold.

\begin{assumption}
\label{als:an:a:1}
There exists constant $0 < \mu \leq L$ such that the following relations hold:
\begin{align}
   & f(x) \leq f(y) + \langle \nabla f(y), x-y\rangle + \frac{L}{2} \|x-y\|^2_{\mathbf{H}(y)}, \label{als:an:7} \\
   & f(x) \geq f(y) + \langle \nabla f(y), x-y\rangle + \frac{\mu}{2} \|x-y\|^2_{\mathbf{H}(y)}, \label{als:an:8}
\end{align}
for all $x,y \in \mathcal{Q}_{\rho}$.
\end{assumption} 

In the literature, these constants have been referred to as the relative smoothness constant, $L$ and the relative convexity constant, $\mu$ \cite{karimireddy2018global, gower2019rsn}. One can show that the conditions of assumption \ref{als:an:a:1} are direct consequences of the smoothness and strong convexity conditions. Indeed, from Lemma 2
of \cite{karimireddy2018global}, we have the following relations
\begin{align*}
& \mbox{$L$--smooth + $\mu$--strongly convex} \quad \Rightarrow \quad  \mbox{$c$--stability}, \\
 & \mbox{$c$--stability} \quad \Rightarrow \quad \mbox{$L$--relative smoothness \&  $\mu$--relative convexity}. 
\end{align*}
where, the condition $c$--stability was introduced in \cite{karimireddy2018global}. Specially in \cite{karimireddy2018global}, the authors proved that assuming the $c$--stability of the Hessian matrix is sufficient for proving global linear convergence of the Newton method. This specific assumption of stable Hessian was also used to analyze the statistical characteristics of the logistic regression \cite{bach2010}, and the convergence of \textit{Stochastic Gradient descent} (SGD) for logistic regression \cite{bach2013,bach2014}. We will also assume the following:
\begin{assumption}
\label{als:an:a:2}
For all $x \in \R^n$, we have $ \nabla f(x) \in \textbf{Range}(\mathbf{H}(x))$.
\end{assumption}
If the Hessian is positive definite, assumption \ref{als:an:a:2} holds trivially. Moreover, the above assumption holds generally for the \textit{Generalized Linear Model}.

\subsection{Preliminary Results}
\label{als:an:subsec:prel}
In the following, we will introduce some preliminary technical results that will be used frequently throughout our convergence analysis.

\begin{lemma} 
\label{als:an:lem:1}
(Lemma 9 in \cite{gower2019rsn}) Let $y \in \R^n$, $\kappa>0$ and $\mathbf{H} \in \R^{n\times n}$ be a symmetric positive semi-definite matrix. Let $w \in \textbf{Range} (\mathbf{H})$. Then, we have
\begin{equation}
\label{als:an:9}
 \min_{x \in \R^n}  \langle w, x-y \rangle + \frac{\kappa}{2} \|x-y\|_{\mathbf{H}}^2 =  -\frac{1}{2 \kappa} \|w\|_{\mathbf{H}^\dagger}^2.
\end{equation}
\end{lemma}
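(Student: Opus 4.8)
The plan is to reduce the minimization to an unconstrained quadratic-type problem and solve it by completing the square, being careful about the fact that $\mathbf{H}$ is only positive \emph{semi}-definite so ordinary inversion is replaced by the pseudo-inverse. First I would use the hypothesis $w \in \textbf{Range}(\mathbf{H})$, which is precisely what makes the objective bounded below: on $\textbf{Null}(\mathbf{H})$ the quadratic term vanishes, so if $w$ had any component orthogonal to $\textbf{Range}(\mathbf{H})$ the linear term $\langle w, x-y\rangle$ would drive the objective to $-\infty$. The range condition guarantees this does not happen, so a finite minimizer exists.

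Next I would change variables to $z \eqdef x - y$, turning the problem into $\min_{z} \langle w, z\rangle + \frac{\kappa}{2}\|z\|_{\mathbf{H}}^2 = \min_z \langle w, z\rangle + \frac{\kappa}{2} z^\top \mathbf{H} z$. Since $\mathbf{H}$ is symmetric positive semi-definite, I would decompose $\R^n = \textbf{Range}(\mathbf{H}) \oplus \textbf{Null}(\mathbf{H})$ and write $z = z_R + z_N$ accordingly. Because $w \in \textbf{Range}(\mathbf{H})$, the cross term $\langle w, z_N\rangle$ vanishes, and $z^\top \mathbf{H} z = z_R^\top \mathbf{H} z_R$, so the objective depends only on $z_R$ and the component $z_N$ is free (contributing nothing). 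On the range, $\mathbf{H}$ acts invertibly with inverse given by $\mathbf{H}^\dagger$, so I can complete the square: setting the gradient $w + \kappa \mathbf{H} z = 0$ yields the optimal $z^* = -\tfrac{1}{\kappa}\mathbf{H}^\dagger w$ (valid since $w \in \textbf{Range}(\mathbf{H})$ means $\mathbf{H}\mathbf{H}^\dagger w = w$).

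Then I would substitute $z^*$ back into the objective. Using $\mathbf{H}\mathbf{H}^\dagger w = w$ and the symmetry/idempotence properties of the pseudo-inverse, the linear term gives $\langle w, z^*\rangle = -\tfrac{1}{\kappa}\|w\|_{\mathbf{H}^\dagger}^2$ and the quadratic term gives $\tfrac{\kappa}{2}\cdot \tfrac{1}{\kappa^2}\, w^\top \mathbf{H}^\dagger \mathbf{H}\mathbf{H}^\dagger w = \tfrac{1}{2\kappa}\|w\|_{\mathbf{H}^\dagger}^2$. Summing these yields the claimed value $-\tfrac{1}{2\kappa}\|w\|_{\mathbf{H}^\dagger}^2$.

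The main obstacle, and really the only subtle point, is the careful bookkeeping of the pseudo-inverse identities on $\textbf{Range}(\mathbf{H})$ versus $\textbf{Null}(\mathbf{H})$: one must justify $\mathbf{H}\mathbf{H}^\dagger w = w$ from the range hypothesis and invoke $\mathbf{H}^\dagger \mathbf{H}\mathbf{H}^\dagger = \mathbf{H}^\dagger$ in the quadratic term, rather than naively canceling as if $\mathbf{H}$ were invertible. Everything else is routine completion of the square. Since this is stated as a cited lemma from \cite{gower2019rsn}, I would keep the argument compact and simply verify these pseudo-inverse steps explicitly.
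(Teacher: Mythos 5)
Your proof is correct: the reduction to $z = x-y$, the range/null decomposition justifying boundedness below, the stationary point $z^* = -\tfrac{1}{\kappa}\mathbf{H}^\dagger w$ (valid via the projection identity $\mathbf{H}\mathbf{H}^\dagger w = w$), and the evaluation using $\mathbf{H}^\dagger \mathbf{H}\mathbf{H}^\dagger = \mathbf{H}^\dagger$ all check out, with convexity guaranteeing the stationary point is the global minimum. The paper itself gives no proof of this lemma --- it is cited verbatim as Lemma 9 of \cite{gower2019rsn} --- and your argument is essentially the standard one given in that reference, so there is nothing to flag beyond noting that your write-up is a correct self-contained substitute for the citation.
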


\begin{lemma}
\label{als:an:lem:2}
Let Assumptions \ref{als:an:a:1} and \ref{als:an:a:2} holds, then we have the following
\begin{align}
\label{als:an:10}
 f(x_k) - f^* & \leq \frac{1}{2\mu} \|\nabla f(x_k)\|_{\mathbf{H}^\dagger(x_k) }^2.    
\end{align}
\end{lemma}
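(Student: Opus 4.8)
The plan is to combine the relative convexity inequality from Assumption~\ref{als:an:a:1} with the closed-form minimization supplied by Lemma~\ref{als:an:lem:1}. First I would instantiate the lower bound \eqref{als:an:8} at the pair $y = x_k$ and $x = x^*$, which reads
\begin{align*}
f^* = f(x^*) \geq f(x_k) + \langle \nabla f(x_k), x^* - x_k \rangle + \frac{\mu}{2} \|x^* - x_k\|^2_{\mathbf{H}(x_k)}.
\end{align*}
Here I rely on the fact that both the current iterate $x_k$ and the minimizer $x^*$ lie in the level set $\mathcal{Q}_{\rho}$, so that the inequality is indeed applicable to this pair.

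The key observation is that the right-hand side, viewed as a function of the second argument, is bounded below by its infimum over all of $\R^n$:
\begin{align*}
f^* \geq \min_{x \in \R^n} \left[ f(x_k) + \langle \nabla f(x_k), x - x_k \rangle + \frac{\mu}{2} \|x - x_k\|^2_{\mathbf{H}(x_k)} \right],
\end{align*}
since evaluating the bracketed expression at $x = x^*$ can only exceed its global minimum. I would then evaluate this minimum using Lemma~\ref{als:an:lem:1} with the substitutions $w = \nabla f(x_k)$, $\kappa = \mu$, and $\mathbf{H} = \mathbf{H}(x_k)$, giving
\begin{align*}
\min_{x \in \R^n} \left[ \langle \nabla f(x_k), x - x_k \rangle + \frac{\mu}{2} \|x - x_k\|^2_{\mathbf{H}(x_k)} \right] = -\frac{1}{2\mu} \|\nabla f(x_k)\|^2_{\mathbf{H}^\dagger(x_k)}.
\end{align*}
Substituting this back and rearranging yields precisely $f(x_k) - f^* \leq \frac{1}{2\mu} \|\nabla f(x_k)\|^2_{\mathbf{H}^\dagger(x_k)}$, as claimed.

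The main obstacle, and the only place where genuine care is needed, is verifying the hypothesis of Lemma~\ref{als:an:lem:1}, namely that $w = \nabla f(x_k) \in \textbf{Range}(\mathbf{H}(x_k))$. This is exactly what Assumption~\ref{als:an:a:2} guarantees; without it, the pseudo-inverse expression $\|\nabla f(x_k)\|^2_{\mathbf{H}^\dagger(x_k)}$ would not correctly represent the value of the minimum (the infimum could even be $-\infty$ if the gradient has a component outside the range of the Hessian). Beyond checking this range condition, the argument is entirely routine, drawing only on the relative convexity constant $\mu$ and the quadratic lower model already encoded in \eqref{als:an:8}.
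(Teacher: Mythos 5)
Your proposal is correct and follows essentially the same route as the paper: apply \eqref{als:an:8} with $y = x_k$, pass from the evaluation at (or minimization over points near) $x^*$ to the global minimum over $\R^n$, and invoke Lemma~\ref{als:an:lem:1} with $w = \nabla f(x_k)$ and $\kappa = \mu$, where Assumption~\ref{als:an:a:2} supplies the range condition $\nabla f(x_k) \in \textbf{Range}(\mathbf{H}(x_k))$. The only cosmetic difference is that the paper minimizes both sides over $x \in \mathcal{Q}_{\rho}$ before relaxing to $\R^n$, whereas you substitute $x = x^*$ directly; both hinge on the same implicit fact that $x^* \in \mathcal{Q}_{\rho}$, which you rightly flag.
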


\begin{proof}
Substituting $y = x_k$ in \eqref{als:an:8}, we get
\begin{align*}
f(x) \overset{ \eqref{als:an:8}}{\geq} f(x_k) + \langle \nabla f(x_k), x-x_k \rangle + \frac{\mu}{2} \|x-x_k\|^2_{\mathbf{H}(x_k)}.
\end{align*}
Minimizing over all $x \in \mathcal{Q}_{\rho}$ and using Lemma \ref{als:an:lem:1} we get
\begin{align*}
f^* -  f(x_k) & \geq \min_{x \in \mathcal{Q}_{\rho}} \left[\langle \nabla f(x_k), x-x_k \rangle + \frac{\mu}{2} \|x-x_k\|^2_{\mathbf{H}(x_k)}\right] \\
&  \geq \min_{x \in \R^n} \left[\langle \nabla f(x_k), x-x_k \rangle + \frac{\mu}{2} \|x-x_k\|^2_{\mathbf{H}(x_k)}\right] \overset{ \eqref{als:an:9}}{=} -\frac{1}{2\mu} \|\nabla f(x_k)\|_{\mathbf{H}^\dagger(x_k) }^2.
\end{align*}
This proves the Lemma.
\end{proof}

Next, we provide the definition of {\em Generalized Linear Model} that satisfy assumptions \ref{als:an:a:1} and \ref{als:an:a:2}. Let $\phi_i: \R \to \R_+$ be twice differentiable functions such that $ u \leq \phi_i''(t) \leq \ell, \ 0 \leq u \leq \ell$ hold for all $i = 1,2,...,m$. Take, $A = [a_1, a_2, ...,a_m] \in \R^{n \times m}$ where $a_i \in \R^n$ for $i=1, \ldots, m$. Now, let us function $f:\R^n \to \R$ is defined as follows:
\begin{align}
\label{als:an:11}
  f(x) = \frac{1}{m} \sum \limits_{i=1}^m \phi_i (a_i^{\top} x) +\frac{\alpha}{2}\|x\|^2,
\end{align}
for some $\alpha > 0$. The problem $\min_{x \in \R^n} f(x)$ is called a {\em Generalized Linear Model} with $L_2$ regularization. The Hessian of the above function is given by
\begin{align}
\label{als:an:12}
  \nabla^2 f(x) =  \mathbf{H}(x)  = \textstyle \frac{1}{m} \sum \limits_{i=1}^m a_ia_i^{\top}\phi_i'' (a_i^{\top} x) + \alpha I =  \frac{1}{m} 
A \Phi'' (A^{\top} x) A^{\top} + \alpha I.
\end{align}
Since, $  \mathbf{H}(x) \succ 0$ for all $x$ assumption \ref{als:an:a:2} is satisfied trivially. The following Lemma was proven in \cite{gower2019rsn}.

\begin{lemma}
\label{als:an:lem:3}
(Proposition 1, \cite{gower2019rsn}) Let $f:\R^n \to \R$ be a Generalized Linear Model with $0 \leq u \leq \ell$. Then, assumption \ref{als:an:a:2} is satisfied with
\begin{align}
\label{als:an:13}
  L = \frac{\ell \ \sigma_{\max}^2(A) +m\alpha}{u \ \sigma_{\max}^2(A)+m\alpha }
\qquad \mbox{and} \qquad
\mu= \frac{u \ \sigma_{\max}^2(A)+m\alpha }{\ell \ \sigma_{\max}^2(A) +m\alpha}. 
\end{align}
\end{lemma}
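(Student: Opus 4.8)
The plan is to derive the relative smoothness and convexity estimates \eqref{als:an:7}--\eqref{als:an:8} directly from the second-order Taylor expansion with integral remainder. Writing $d \eqdef x-y$, the GLM objective \eqref{als:an:11} satisfies the exact identity
\begin{equation*}
f(x) = f(y) + \langle \nabla f(y), d\rangle + \int_0^1 (1-t)\, d^\top \mathbf{H}(y+td)\, d \,dt,
\end{equation*}
so the entire task reduces to controlling the quadratic form $d^\top \mathbf{H}(z)\, d$ uniformly over $z$ and comparing it against the fixed reference $d^\top \mathbf{H}(y)\, d$.

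First I would record a uniform two-sided bound on the Hessian. Since $\Phi''(A^\top z)$ is diagonal with entries $\phi_i''(a_i^\top z) \in [u,\ell]$, the representation \eqref{als:an:12} gives, for every $z \in \R^n$,
\begin{equation*}
\tfrac{u}{m}\, A A^\top + \alpha I \preceq \mathbf{H}(z) \preceq \tfrac{\ell}{m}\, A A^\top + \alpha I.
\end{equation*}
Introducing the two nonnegative scalars $s \eqdef \tfrac{1}{m}\|A^\top d\|^2$ and $w \eqdef \alpha \|d\|^2$, both depending only on $d$, this reads $u s + w \le d^\top \mathbf{H}(z)\, d \le \ell s + w$ for all $z$. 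Applying the upper bound inside the integral (with $\int_0^1(1-t)\,dt = \tfrac12$) bounds the remainder by $\tfrac12(\ell s + w)$, while the lower bound at $z=y$ gives $d^\top\mathbf{H}(y)\,d \ge u s + w$; the reversed pair of inequalities produces the $\mu$ side. Because $d$ is fixed, the \emph{same} pair $(s,w)$ appears on both sides, which is precisely what lets the varying-point remainder be compared to the fixed quadratic $\mathbf{H}(y)$.

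The crux is then a purely scalar optimization. For \eqref{als:an:7} it suffices to take the smallest $L$ with $\ell s + w \le L(u s + w)$, and for \eqref{als:an:8} the largest $\mu$ with $u s + w \ge \mu(\ell s + w)$, over all admissible $(s,w)$. The decisive extra ingredient is the constraint linking $s$ and $w$: since $\|A^\top d\|^2 \le \sigma_{\max}^2(A)\,\|d\|^2$, the ratio $r \eqdef s/w$ obeys $0 \le r \le \sigma_{\max}^2(A)/(m\alpha)$. Writing the target ratios as $\tfrac{\ell r + 1}{u r + 1}$ and $\tfrac{u r + 1}{\ell r + 1}$, a one-line derivative check (both monotonicities stemming from $\ell \ge u$) shows the former is nondecreasing and the latter nonincreasing in $r$, so the extrema occur at $r = \sigma_{\max}^2(A)/(m\alpha)$, which is attained by choosing $d$ to be a top left singular vector of $A$. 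Substituting this value reproduces exactly the stated constants $L$ and $\mu$ in \eqref{als:an:13}. I expect the only delicate point to be the bookkeeping that the remainder is evaluated along $y+td$ while the comparison uses $\mathbf{H}(y)$; the uniform sandwich bound above is what decouples these, after which the scalar ratio argument makes the estimate tight.
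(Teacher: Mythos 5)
Your proof is correct, but be aware that the paper contains no proof of this lemma to compare against: it is imported verbatim from Proposition~1 of \cite{gower2019rsn} (``The following Lemma was proven in \cite{gower2019rsn}''). Your argument is, in substance, the standard one from that reference: both rest on the uniform sandwich
\begin{equation*}
\tfrac{u}{m}\,AA^{\top}+\alpha I \;\preceq\; \mathbf{H}(z) \;\preceq\; \tfrac{\ell}{m}\,AA^{\top}+\alpha I \qquad \text{for all } z\in\R^n,
\end{equation*}
combined with monotonicity of the scalar ratio $t\mapsto(\ell t+m\alpha)/(u t+m\alpha)$ on $[0,\sigma^2_{\max}(A)]$ and the integral form of the Taylor remainder. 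The structural difference is where the spectral information enters: the cited proof first establishes the matrix comparability ($c$-stability) $\mathbf{H}(x)\preceq L\,\mathbf{H}(y)$ and $\mathbf{H}(x)\succeq \mu\,\mathbf{H}(y)$ for all $x,y$ by diagonalizing $AA^{\top}$ and bounding eigenvalue ratios, then feeds this into the remainder; you instead fix the direction $d$ and compare the two quadratic forms through the shared pair $(s,w)$ with $r=s/w$, a scalarized version of the same eigenvalue computation that avoids an explicit spectral decomposition and yields \eqref{als:an:7}--\eqref{als:an:8} on all of $\R^n$, not merely on $\mathcal{Q}_{\rho}$, which is more than the lemma requires. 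Three small remarks: (i) the lemma's reference to Assumption~\ref{als:an:a:2} is a typo for Assumption~\ref{als:an:a:1}, and you correctly proved the intended statement; (ii) your ratio argument needs $w>0$, i.e.\ $\alpha>0$ and $d\neq 0$, which the GLM definition \eqref{als:an:11} guarantees (and $d=0$ is trivial); (iii) the closing claim that the constants are \emph{tight} via a top left singular vector is not needed for the lemma, which only asserts the inequalities, and genuine attainment would additionally require the bounds $u\leq\phi_i''\leq\ell$ to be achieved along the segment, so that sentence should be dropped or softened.
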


\begin{lemma}
\label{als:an:lem:4}
(Woodbury Matrix Identity, \cite{henderson1981}) Assume matrices $A \in \R^{q \times q}$, and $C \in \R^{r \times r }$ are invertible. Then the following identity holds:
\begin{align}
\label{als:an:14}
\left(A+ UCV\right)^{-1} = A^{-1} - A^{-1}  U \left(C^{-1}+VA^{-1} U\right)^{-1} VA^{-1},
\end{align}
for any matrices $U \in \R^{q \times r}$ and $V \in \R^{r \times q}$ such that $A+ UCV$ and $C^{-1}+VA^{-1} U$ are non-singular.
\end{lemma}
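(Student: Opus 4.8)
The plan is to prove the identity by direct verification: I will multiply the claimed expression on the right by $A + UCV$ and check that the product equals the identity matrix $I$. Since $A + UCV$ is assumed nonsingular, exhibiting any right inverse identifies it as the genuine inverse, so a one-sided check suffices.

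First I would introduce the shorthand $W \eqdef (C^{-1} + VA^{-1}U)^{-1}$, which is well defined by the nonsingularity hypothesis on $C^{-1} + VA^{-1}U$, and write the candidate inverse as $N \eqdef A^{-1} - A^{-1}UWVA^{-1}$. Expanding the product $(A + UCV)N$ term by term and using $AA^{-1} = I$ gives
\[
(A + UCV)N = I - UWVA^{-1} + UCVA^{-1} - UCVA^{-1}UWVA^{-1}.
\]
Each of the three remaining terms carries a factor $U$ on the left and $VA^{-1}$ on the right, so I would collect them as
\[
U\bigl(C - W - CVA^{-1}UW\bigr)VA^{-1} = U\bigl(C - (I + CVA^{-1}U)W\bigr)VA^{-1}.
\]

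The key step, and the only place where the specific form of $W$ enters, is to show that the bracketed factor vanishes. For this I would factor $C^{-1} + VA^{-1}U = C^{-1}(I + CVA^{-1}U)$, so that $W = (I + CVA^{-1}U)^{-1}C$ and hence $(I + CVA^{-1}U)W = C$. Substituting this into the bracket yields $C - C = 0$, the whole correction term disappears, and the product collapses to $I$, which completes the verification.

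I do not anticipate a genuine obstacle, since this is an algebraic identity whose proof is purely computational. The only points requiring care are bookkeeping ones: preserving the order of the noncommuting matrix factors throughout the expansion, and invoking the nonsingularity of $C^{-1} + VA^{-1}U$ (equivalently of $I + CVA^{-1}U$) precisely where the inverse $W$ is rewritten. An alternative route would be to read the identity off the two block-triangular factorizations of the bordered matrix with blocks $A$, $U$, $-V$, $C^{-1}$, but the direct multiplication above is shorter and entirely self-contained.
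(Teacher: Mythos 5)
Your proof is correct and complete. Note that the paper itself gives no proof of this lemma at all: it is stated as a known result with a citation to Henderson and Searle \cite{henderson1981}, so there is no in-paper argument to compare against. Your direct verification --- multiplying the candidate $N = A^{-1} - A^{-1}UWVA^{-1}$ with $W = (C^{-1}+VA^{-1}U)^{-1}$ against $A+UCV$, collecting the three correction terms into $U\bigl(C-(I+CVA^{-1}U)W\bigr)VA^{-1}$, and killing the bracket via the factorization $C^{-1}+VA^{-1}U = C^{-1}(I+CVA^{-1}U)$, hence $(I+CVA^{-1}U)W = C$ --- is the standard self-contained route, and you handle the two delicate points properly: the invertibility of $I+CVA^{-1}U$ follows from that of $C$ and $C^{-1}+VA^{-1}U$ since $I+CVA^{-1}U = C(C^{-1}+VA^{-1}U)$, and for square matrices a one-sided inverse check indeed suffices. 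The only cosmetic remark is that you need not invoke the assumed nonsingularity of $A+UCV$ to justify the one-sided check: for square matrices, $MN=I$ already forces $NM=I$, so the hypothesis that $A+UCV$ is nonsingular is in fact a consequence of your computation rather than an input to it.
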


\begin{lemma}
\label{als:an:lem:5}
The following relations hold:
\begin{align}
    & \mathbf{H}(x)   \mathbf{K}(x)  = I - \frac{1}{\rho} G  \mathbf{K}(x), \label{als:an:15} \\
    & G \mathbf{K}(x) G = \rho G - \rho \mathbf{L}(x). \label{als:an:16}
\end{align}
\end{lemma}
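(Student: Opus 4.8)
The plan is to prove the two identities separately, treating \eqref{als:an:15} as an immediate consequence of the definition of $\mathbf{K}(x)$ and reserving the Woodbury identity (Lemma \ref{als:an:lem:4}) for the second, more substantive identity \eqref{als:an:16}. Throughout I would suppress the argument $x$ and write $\mathbf{H} = \mathbf{H}(x)$, $\mathbf{K} = \mathbf{K}(x)$, $\mathbf{L} = \mathbf{L}(x)$ for brevity.

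For \eqref{als:an:15}, I would simply unwind the definition $\mathbf{K} = \left(\frac{1}{\rho}G + \mathbf{H}\right)^{-1}$. Multiplying this inverse by $\frac{1}{\rho}G + \mathbf{H}$ on the left gives $\left(\frac{1}{\rho}G + \mathbf{H}\right)\mathbf{K} = I$, so that $\frac{1}{\rho}G\mathbf{K} + \mathbf{H}\mathbf{K} = I$, and rearranging yields $\mathbf{H}\mathbf{K} = I - \frac{1}{\rho}G\mathbf{K}$. No further work is needed here.

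For \eqref{als:an:16}, the key step is to apply Lemma \ref{als:an:lem:4} with the splitting $\mathbf{H} = \mathbf{H}^{1/2}\cdot I\cdot \mathbf{H}^{1/2}$, taking $A = \frac{1}{\rho}G$, $U = \mathbf{H}^{1/2}$, $C = I$, and $V = \mathbf{H}^{1/2}$, so that $A + UCV = \frac{1}{\rho}G + \mathbf{H}$ is exactly $\mathbf{K}^{-1}$. Since $A^{-1} = \rho G^{-1}$, the Woodbury identity produces
\begin{align*}
\mathbf{K} = \rho G^{-1} - \rho^2 G^{-1}\mathbf{H}^{1/2}\left(I + \rho\,\mathbf{H}^{1/2}G^{-1}\mathbf{H}^{1/2}\right)^{-1}\mathbf{H}^{1/2}G^{-1}.
\end{align*}
Conjugating by $G$ on both sides, the outer factors $G\cdot G^{-1} = I$ collapse, leaving $G\mathbf{K}G = \rho G - \rho^2\,\mathbf{H}^{1/2}\left(I + \rho\,\mathbf{H}^{1/2}G^{-1}\mathbf{H}^{1/2}\right)^{-1}\mathbf{H}^{1/2}$.

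The remaining, and most delicate, step is to recognize the trailing term as $\rho\mathbf{L}$. Here the only real bookkeeping issue is the factor of $\rho$ hidden inside the inner inverse of $\mathbf{L}$: writing $\frac{1}{\rho}I + \mathbf{H}^{1/2}G^{-1}\mathbf{H}^{1/2} = \frac{1}{\rho}\left(I + \rho\,\mathbf{H}^{1/2}G^{-1}\mathbf{H}^{1/2}\right)$ and inverting shows that $\left(\frac{1}{\rho}I + \mathbf{H}^{1/2}G^{-1}\mathbf{H}^{1/2}\right)^{-1} = \rho\left(I + \rho\,\mathbf{H}^{1/2}G^{-1}\mathbf{H}^{1/2}\right)^{-1}$, whence $\mathbf{L} = \rho\,\mathbf{H}^{1/2}\left(I + \rho\,\mathbf{H}^{1/2}G^{-1}\mathbf{H}^{1/2}\right)^{-1}\mathbf{H}^{1/2}$. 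Substituting this back matches the trailing term exactly as $\rho\mathbf{L}$, completing the identity. I expect this final factor-of-$\rho$ matching to be the only point requiring care; everything else is routine matrix algebra once the Woodbury splitting is chosen.
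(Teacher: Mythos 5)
Your proof is correct, but for the second identity you run the Woodbury lemma in the opposite direction from the paper, so the two arguments are genuinely different in structure. The paper starts from the definition of $\mathbf{L}(x)$ and expands the \emph{inner} inverse $\left(\frac{1}{\rho}I + \mathbf{H}^{\frac{1}{2}}(x)G^{-1}\mathbf{H}^{\frac{1}{2}}(x)\right)^{-1}$ via Lemma \ref{als:an:lem:4} with the splitting $A = \frac{1}{\rho}I$, $U = V = \mathbf{H}^{\frac{1}{2}}(x)$, $C = G^{-1}$; this yields the intermediate identity $\mathbf{L}(x) = \rho\,\mathbf{H}(x) - \rho\,\mathbf{H}(x)\mathbf{K}(x)\mathbf{H}(x)$, which is then converted into $G$-terms by substituting $\mathbf{H}(x) = \mathbf{K}^{-1}(x) - \frac{1}{\rho}G$ and invoking \eqref{als:an:15} — so in the paper, part two genuinely depends on part one. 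You instead expand $\mathbf{K}(x)$ itself with the splitting $A = \frac{1}{\rho}G$, $U = V = \mathbf{H}^{\frac{1}{2}}(x)$, $C = I$, conjugate by $G$ so the outer $G^{-1}$ factors collapse, and match the trailing term with $\rho\,\mathbf{L}(x)$ via the scalar rescaling $\left(\frac{1}{\rho}I + \mathbf{H}^{\frac{1}{2}}(x)G^{-1}\mathbf{H}^{\frac{1}{2}}(x)\right)^{-1} = \rho\left(I + \rho\,\mathbf{H}^{\frac{1}{2}}(x)G^{-1}\mathbf{H}^{\frac{1}{2}}(x)\right)^{-1}$. Your route makes \eqref{als:an:16} logically independent of \eqref{als:an:15} and is slightly more economical (one Woodbury application plus a rescaling, with no back-substitution), whereas the paper's route produces the identity $\mathbf{L}(x) = \rho\,\mathbf{H}(x) - \rho\,\mathbf{H}(x)\mathbf{K}(x)\mathbf{H}(x)$ along the way, which is of some independent interest. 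One small completeness point: you should note that the nonsingularity hypotheses of Lemma \ref{als:an:lem:4} hold for your splitting, which they do since $\frac{1}{\rho}G \succ 0$ and $I + \rho\,\mathbf{H}^{\frac{1}{2}}(x)G^{-1}\mathbf{H}^{\frac{1}{2}}(x) \succ 0$ because $\mathbf{H}(x) \succeq 0$ by convexity of $f$. Your proof of \eqref{als:an:15} coincides with the paper's trivial rearrangement of the definition of $\mathbf{K}(x)$.
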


\begin{proof}
Since, $\mathbf{K}(x)$ is invertible, we have the following simplification:
\begin{align*}
   \mathbf{H}(x)  \mathbf{K}(x) = \left[\frac{1}{\rho} G+ \mathbf{H}(x) - \frac{1}{\rho} G\right]  \mathbf{K}(x)=  \left[\mathbf{K}^{-1}(x)- \frac{1}{\rho} G\right]  \mathbf{K}(x) = I - \frac{1}{\rho} G  \mathbf{K}(x).
\end{align*}
This resolves the first part of Lemma \ref{als:an:lem:5}. This proves the first part of the Lemma. As $G^{-1} \succ 0$ follows from our construction, using the Woodburry matrix identity and the definition of $\mathbf{L}(x) $, we have the following:
\begin{align*}
 \mathbf{L}(x) & =   \mathbf{H}^{\frac{1}{2}} (x)  \left(\frac{1}{\rho}+ \mathbf{H}^{\frac{1}{2}} (x) G^{-1} \mathbf{H}^{\frac{1}{2}} (x) \right)^{-1} \mathbf{H}^{\frac{1}{2}} (x) \nonumber \\
 & \overset{ \eqref{als:an:14}}{=}  \mathbf{H}^{\frac{1}{2}} (x)   \left[ \rho -  \rho^2  \mathbf{H}^{\frac{1}{2}} (x) \left( G+  \rho \mathbf{H}^{\frac{1}{2}} (x)  \mathbf{H}^{\frac{1}{2}} (x) \right)^{-1} \mathbf{H}^{\frac{1}{2}} (x)   \right] \mathbf{H}^{\frac{1}{2}} (x) \\
 & = \mathbf{H}^{\frac{1}{2}} (x)  \left[ \rho -  \rho  \mathbf{H}^{\frac{1}{2}} (x) \left(\frac{1}{\rho} G+  \mathbf{H}^{\frac{1}{2}} (x) \mathbf{H}^{\frac{1}{2}} (x) \right)^{-1} \mathbf{H}^{\frac{1}{2}} (x) \right] \mathbf{H}^{\frac{1}{2}} (x) \\
 & = \mathbf{H}^{\frac{1}{2}} (x)  \left[ \rho -  \rho  \mathbf{H}^{\frac{1}{2}} (x)  \mathbf{K}(x)  \mathbf{H}^{\frac{1}{2}} (x) \right] \mathbf{H}^{\frac{1}{2}} (x)  \\
 & = \rho  \mathbf{H} (x)  - \rho  \mathbf{H} (x) \mathbf{K}(x)   \mathbf{H} (x)  \nonumber \\
 & \overset{ \eqref{als:an:15}}{=} \rho  \mathbf{H} (x)  - \rho  \mathbf{H} (x)  \mathbf{K}(x) \left[\mathbf{K}^{-1}(x) - \frac{1}{\rho} G\right]  =  \mathbf{H} (x)  \mathbf{K}(x) G = G - \frac{1}{\rho} G \mathbf{K}(x) G.
\end{align*}
With further simplification, we get the second part of Lemma \ref{als:an:lem:5}.
\end{proof}

\begin{lemma}
\label{als:an:lem:6}
(Lemma 10, \cite{gower2019rsn}) Assume, $N$ is a positive semidefinite matrix and matrix $M$ such that $\textbf{Null}(N) \subset \textbf{Null}(M^{\top} )$ holds. Then, we have the following:
\begin{equation*}
\textbf{Null}(M) = \textbf{Null}(M^{\top}N M) \quad  \text{and} \quad \textbf{Range}(M^{\top} ) = \textbf{Range}(M^{\top}N M).
\end{equation*}
\end{lemma}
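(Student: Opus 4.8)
The plan is to reduce the two claimed set equalities to a single null-space identity and then verify that identity by a double-inclusion argument that exploits the positive semidefiniteness of $N$. Fixing dimensions $N \in \R^{p \times p}$ and $M \in \R^{p \times q}$ (so that $M^{\top}N M \in \R^{q \times q}$ is symmetric), I would first observe that because $N = N^{\top}$ the matrix $M^{\top}N M$ is symmetric, and hence the fundamental relation between the range and null space of a symmetric matrix gives $\textbf{Range}(M^{\top}N M) = \textbf{Null}(M^{\top}N M)^{\perp}$. Combining this with the standard identity $\textbf{Range}(M^{\top}) = \textbf{Null}(M)^{\perp}$, one sees that taking orthogonal complements converts the first claimed equality $\textbf{Null}(M) = \textbf{Null}(M^{\top}N M)$ directly into the second, $\textbf{Range}(M^{\top}) = \textbf{Range}(M^{\top}N M)$. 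It therefore suffices to prove the null-space identity, and all the work goes into that.

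For the easy inclusion $\textbf{Null}(M) \subseteq \textbf{Null}(M^{\top}N M)$ I would simply note that if $Mx = 0$ then $M^{\top}N M x = M^{\top}N(Mx) = 0$; this direction needs neither the semidefiniteness of $N$ nor the null-space hypothesis.

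The reverse inclusion is the crux. Since $N \succeq 0$, I would factor it through its symmetric PSD square root as $N = N^{\frac{1}{2}}N^{\frac{1}{2}}$. Given $x$ with $M^{\top}N M x = 0$, left-multiplying by $x^{\top}$ yields $\norm{N^{\frac{1}{2}}Mx}^2 = x^{\top}M^{\top}N M x = 0$, so $N^{\frac{1}{2}}Mx = 0$ and therefore $N M x = N^{\frac{1}{2}}\bigl(N^{\frac{1}{2}}Mx\bigr) = 0$, which shows $Mx \in \textbf{Null}(N)$. At this point the hypothesis $\textbf{Null}(N) \subset \textbf{Null}(M^{\top})$ enters: it gives $M^{\top}(Mx) = 0$, and left-multiplying once more by $x^{\top}$ produces $\norm{Mx}^2 = 0$, i.e.\ $Mx = 0$. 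Hence $\textbf{Null}(M^{\top}N M) \subseteq \textbf{Null}(M)$, completing the identity and, via the complementation step above, the range equality as well.

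The main obstacle is precisely this reverse inclusion, and in particular the place where the assumed containment is used: the PSD factorization only lets me conclude that $Mx$ lands in $\textbf{Null}(N)$, and it is exactly the hypothesis $\textbf{Null}(N) \subset \textbf{Null}(M^{\top})$ that transfers this back into $M^{\top}(Mx) = 0$ and thereby forces $Mx = 0$. I would be careful to keep the dimensions consistent throughout, so that the orthogonal-complement manipulations of the first step—which rely on $M^{\top}N M$ being a genuine symmetric operator on $\R^{q}$—are legitimate.
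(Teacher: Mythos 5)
Your proof is correct. Note that the paper does not prove this lemma at all---it imports it verbatim as Lemma 10 of \cite{gower2019rsn}---and your argument (the PSD square root to get $N^{\frac{1}{2}}Mx = 0$ and hence $Mx \in \textbf{Null}(N)$, the hypothesis $\textbf{Null}(N) \subset \textbf{Null}(M^{\top})$ to force $\|Mx\|^2 = x^{\top}M^{\top}Mx = 0$, and orthogonal complementation via $\textbf{Range}(A^{\top}) = \textbf{Null}(A)^{\perp}$ applied to the symmetric matrix $M^{\top}NM$ to deduce the range equality from the null-space equality) is precisely the standard proof given in that reference.
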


\section{Lagrangian Penalty Approaches}
\label{als:an:sec:lp}
In this section we introduce the proposed schemes that are constructed based on Lagrangian penalty methods. To achieve this, let us rewrite the Newton update provided in \eqref{als:an:3} as the solution of the following optimization problem:
\begin{equation}\label{als:an:17} x_{k+1} \in    \argmin_x \frac{1}{2}\|x-x_k\|^2_{ \mathbf{H}(x_k)} \quad \textbf{s.t} \quad   \mathbf{H}(x_k) (x-x_k) = - \frac{1}{L}  \nabla f(x_k),
\end{equation}
here, we use $t = 1/L$. We interpret the above problem as the projected Newton system. In the following, we will discuss the proposed variants of the Newton method that are achieved by incorporating various modern optimization techniques to the above Newton system.
\subsection{Penalty Newton Method (PNM)} Our first approach builds on the idea of penalty method that is used frequently for solving constrained optimization problems. The basic premise of the PNM algorithm is as follows: we modify the objective function of problem \eqref{als:an:17} by adding a penalty term \footnote{For simplicity of exposition, in this work we consider only quadratic penalty function.} to mitigate the constraint violation, then we solve the resulting unconstrained optimization problem to find the next update. Our end goal is that by following the above construction, we can make the algorithm satisfy the equality constraint by increasing the penalty of not doing that. With the above setup, we get the following optimization problem:
\begin{align}
\label{als:an:18}
& x_{k+1} \in   \argmin_x \mathcal{L} (x, \rho)  \eqdef \frac{1}{2}\|x-x_k\|^2_{ \mathbf{H}(x_k)} + \frac{\rho}{2}  \big \|  \mathbf{H}(x_k) (x-x_k) + \frac{1}{L} \nabla f(x_k) \big \|^2_{G^{-1}},
\end{align}
here, $G \succ 0$ is a positive definite matrix. We derive a closed form solution in the next Lemma.

\begin{lemma}
\label{als:an:lem:7}
The following update formula solves the optimization problem of \eqref{als:an:18}:
\begin{align}
\label{als:an:19}
    x_{k+1} = x_k - \frac{1}{L} \left(\frac{1}{\rho} G +   \mathbf{H}(x_k) \right)^{-1} \nabla f(x_k)  = x_k - \frac{1}{L} \mathbf{K}(x_k)  \nabla f(x_k).
\end{align}
\end{lemma}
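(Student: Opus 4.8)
The plan is to treat \eqref{als:an:18} as an unconstrained convex quadratic minimization and to exhibit a stationary point, which for a convex objective is automatically a global minimizer. First I would record convexity explicitly: writing $d = x - x_k$ and $c = \frac{1}{L}\nabla f(x_k)$, the objective is $\tfrac12 d^\top \mathbf{H}(x_k) d + \tfrac{\rho}{2}\|\mathbf{H}(x_k)d + c\|^2_{G^{-1}}$, a sum of two quadratic forms in $d$ whose Hessians $\mathbf{H}(x_k)$ and $\rho\,\mathbf{H}(x_k)G^{-1}\mathbf{H}(x_k)$ are positive semidefinite, since $\mathbf{H}(x_k)\succeq 0$ and $G^{-1}\succ 0$. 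Hence any point satisfying the first-order stationarity condition solves \eqref{als:an:18}.

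Second, I would compute the gradient in $d$. Differentiating and using symmetry of $\mathbf{H}(x_k)$ and $G^{-1}$ gives $\nabla_d \mathcal{L} = \mathbf{H}(x_k) d + \rho\,\mathbf{H}(x_k)G^{-1}(\mathbf{H}(x_k)d + c) = \mathbf{H}(x_k)\big[d + \rho G^{-1}(\mathbf{H}(x_k)d + c)\big]$. Rather than cancel the leading (possibly singular) factor $\mathbf{H}(x_k)$, I would seek $d$ that makes the bracketed factor vanish, which is sufficient for stationarity. Setting the bracket to zero yields $(I + \rho G^{-1}\mathbf{H}(x_k))d = -\rho G^{-1} c$; multiplying on the left by $\frac{1}{\rho}G$ and using $\frac{1}{\rho}G(I + \rho G^{-1}\mathbf{H}(x_k)) = \frac{1}{\rho}G + \mathbf{H}(x_k) = \mathbf{K}^{-1}(x_k)$ collapses this to $\mathbf{K}^{-1}(x_k)\,d = -c$.

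Third, since $\frac{1}{\rho}G + \mathbf{H}(x_k)\succ 0$ (because $G\succ 0$ and $\mathbf{H}(x_k)\succeq 0$), the matrix $\mathbf{K}(x_k) = (\frac{1}{\rho}G + \mathbf{H}(x_k))^{-1}$ is well defined and invertible, so I can solve explicitly: $d = -\mathbf{K}(x_k)c = -\frac{1}{L}\mathbf{K}(x_k)\nabla f(x_k)$, i.e. $x_{k+1} = x_k - \frac{1}{L}\mathbf{K}(x_k)\nabla f(x_k)$, which is exactly \eqref{als:an:19}. A fully rigorous alternative that sidesteps any discussion of canceling $\mathbf{H}(x_k)$ is to substitute this candidate back into $\nabla_d\mathcal{L}$ and verify directly that $(I + \rho G^{-1}\mathbf{H}(x_k))d + \rho G^{-1}c = \rho G^{-1}\mathbf{K}^{-1}(x_k)d + \rho G^{-1}c = 0$, whence the bracket, and hence $\nabla_d\mathcal{L}$, vanishes.

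There is essentially no serious obstacle here; the only point that genuinely demands care is that $\mathbf{H}(x_k)$ need not be invertible, so one should \emph{not} cancel it from the stationarity equation but instead zero out the bracketed factor, which is legitimate precisely because the problem is convex. The invertibility of $\frac{1}{\rho}G + \mathbf{H}(x_k)$, guaranteed by $G\succ 0$, is what makes the closed form \eqref{als:an:19} well posed.
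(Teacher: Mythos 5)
Your proof is correct, and it reaches \eqref{als:an:19} by a genuinely different mechanism than the paper. The paper also starts from the stationarity condition \eqref{als:an:20}, but then argues \emph{backwards}: it plugs the candidate $x_{k+1} = x_k - \frac{1}{L}\mathbf{K}(x_k)\nabla f(x_k)$ into that condition and verifies it is satisfied, the key tool being the identity $\mathbf{H}(x_k)\mathbf{K}(x_k) = I - \frac{1}{\rho}G\mathbf{K}(x_k)$ from Lemma \ref{als:an:lem:5}. You instead argue \emph{forwards}: you factor the gradient as $\mathbf{H}(x_k)\bigl[(I+\rho G^{-1}\mathbf{H}(x_k))d + \rho G^{-1}c\bigr]$, zero the bracket, and note that multiplying by $\frac{1}{\rho}G$ collapses the bracket equation to $\mathbf{K}^{-1}(x_k)\,d = -c$, whose unique solution is the stated update since $\frac{1}{\rho}G+\mathbf{H}(x_k)\succ 0$. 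This buys two things: the closed form is derived rather than guessed-and-checked, and Lemma \ref{als:an:lem:5} is not needed at all for this step. You are also more explicit than the paper on two points it leaves implicit: first, that the objective is a convex quadratic in $d$ (its Hessian $\mathbf{H}(x_k)+\rho\,\mathbf{H}(x_k)G^{-1}\mathbf{H}(x_k)$ is positive semidefinite), so stationarity genuinely certifies membership in the $\argmin$ of \eqref{als:an:18}; second, that one must not cancel the possibly singular factor $\mathbf{H}(x_k)$ --- zeroing the bracket is sufficient (though not necessary) for stationarity, which is all the ``$\in\argmin$'' formulation of \eqref{als:an:18} requires, since when $\mathbf{H}(x_k)$ is singular the minimizer need not be unique. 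The paper's verification-style proof is shorter once Lemma \ref{als:an:lem:5} is in hand, and it reuses machinery that is needed later anyway (e.g.\ in Lemma \ref{als:an:lem:8} and Theorem \ref{als:an:thm:1}), but your route stands on its own and is the more self-contained of the two.
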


\begin{proof}
Setting $\frac{\partial \mathcal{L} (x, \rho)}{\partial x} = 0$, we get
\begin{align}
\label{als:an:20}
    0 = \frac{\partial \mathcal{L} (x, \rho)}{\partial x} & = \left[ \mathbf{H}(x_k)+ \rho \  \mathbf{H}(x_k) G^{-1}  \mathbf{H}(x_k) \right] (x-x_k)  + \frac{\rho}{L}  \mathbf{H}(x_k) G^{-1} \nabla f(x_k).
\end{align}
This is the first order optimality condition of problem \eqref{als:an:18}. Next, we show that the update formula of \eqref{als:an:19} satisfies the above optimatliy condition. Using the update formula, we have the following:
\begin{align}
\label{als:an:21}
   \left[ \mathbf{H}(x_k)+ \rho  \mathbf{H}(x_k) G^{-1}  \mathbf{H}(x_k) \right] & (x_{k+1}-x_k) \overset{ \eqref{als:an:19}}{=}  -  \frac{1}{L} \left[ \mathbf{H}(x_k)+ \rho  \mathbf{H}(x_k) G^{-1}  \mathbf{H}(x_k) \right]  \mathbf{K}(x_k) \nabla f(x_k)  \nonumber \\
   =  - & \frac{1}{L}  \mathbf{H}(x_k)   \mathbf{K}(x_k) \nabla f(x_k) - \frac{\rho}{L}  \mathbf{H}(x_k) G^{-1}  \mathbf{H}(x_k)  \mathbf{K}(x_k)  \nabla f(x_k) \nonumber \\
   \overset{ \eqref{als:an:15}}{=}   - & \frac{1}{L}  \mathbf{H}(x_k)   \mathbf{K}(x_k)  \nabla f(x_k) - \frac{\rho}{L}  \mathbf{H}(x_k) G^{-1} \left( I - \frac{1}{\rho}  G \mathbf{K}(x_k) \right)   \nabla f(x_k) \nonumber \\
   = - & \frac{\rho}{L}  \mathbf{H}(x_k) G^{-1} \nabla f(x_k).
\end{align}
Here, we used the identity, $\mathbf{H}(x_k)   \mathbf{K}(x_k)  = I - \frac{1}{\rho}  G \mathbf{K}(x_k)$ from Lemma \ref{als:an:lem:5}. Substituting the identity of \eqref{als:an:21} into \eqref{als:an:20}, we deduce that $x_{k+1}$ satisfies the first order optimality condition. Therefore, we can say that $x_{k+1}$ provided in \eqref{als:an:19} is a solution of the optimization problem \eqref{als:an:18}.
\end{proof}

 \begin{algorithm}
\caption{$x_{K+1} = \textbf{PNM}(G, c, f(x), K)$}
\label{als:an:alg:pnm}
\begin{algorithmic}
\STATE{Choose initial points $x_0 \in \R^n,  \ \rho_0 \in \R$}
\WHILE{$k \leq K$}
\STATE{\begin{equation*}
     x_{k+1} = x_k -  \frac{1}{L}  \left(\frac{1}{\rho_k} G +  \mathbf{H}(x_k) \right)^{-1}  \nabla f(x_k), \quad \quad   \rho_{k+1} = c \rho_k;
\end{equation*}
$k \leftarrow k+1$;}
\ENDWHILE
\end{algorithmic}
\end{algorithm}

\subsection{Augmented Newton Method (ANM)}
\label{als:an:subsec:als}

Our second approach uses the idea of the augmented Lagrangian method for optimization. The method was proposed by Hestenes in 1969 \cite{Hestenes1969} and was originally called as the method of multipliers (see the works of Powell \cite{powell} and Bertsekas \cite{BERTSEKAS} for a detailed discussion). The basic premise of the ANM algorithm is as follows: we solve problem \eqref{als:an:17} by the augmented Lagrangian method, unlike the penalty method here we update the dual solution $z$ as an estimator of Lagrange multiplier. The major advantage of this approach is that it is not required to make $\rho \rightarrow \infty$ like we did in the penalty method. Instead, we update the corresponding dual solution at each iteration that compensates the smaller penalty. Considering the above framework, we reformulate problem \eqref{als:an:17} using augmented Lagrangian penalty function as follows:
\begin{align}
\label{als:an:22}
x_{k+1} \in  & \argmin_{x} \frac{1}{2}\|x-x_k\|^2_{ \mathbf{H}(x_k)} +  \frac{\rho}{2} \Big \|\mathbf{H}(x_k) (x-x_k) + \frac{1}{L} \nabla f(x_k) \Big\|^2_{G^{-1}} \nonumber \\
& \quad  \qquad \qquad   \textbf{subject to} \qquad   \mathbf{H}(x_k) (x-x_k) + \frac{1}{L} \nabla f(x_k) = 0.
\end{align}
The Lagrangian of the above problem is given by
\begin{align}
\label{als:an:23}
    \mathcal{L} (x,z, \rho) = \frac{1}{2}\|x-x_k\|^2_{ \mathbf{H}(x_k)} & + z^{\top} \left[\mathbf{H}(x_k) (x-x_k) + \frac{1}{L} \nabla f(x_k)\right] \nonumber \\
    & + \frac{\rho}{2} \Big \|\mathbf{H}(x_k) (x-x_k) + \frac{1}{L} \nabla f(x_k) \Big \|^2_{G^{-1}}.
\end{align}
Then, we set the ANM update formulas as follows:
\begin{align}
\label{als:an:24}
    x_{k+1} \in \argmin_{x} \mathcal{L} (x,z_k, \rho), \quad z_{k+1} = z_k + \rho \ G^{-1} \left[\mathbf{H}(x_k) (x_{k+1}-x_k) + \frac{1}{L} \nabla f(x_k)\right].
\end{align}

\begin{algorithm}
\caption{$x_{K+1} = \textbf{ANM}(G,c, f(x), K)$}
\label{als:an:alg:anm}
\begin{algorithmic}
\STATE{Choose initial points $x_0, x_1 \in \R^n, \ \rho_0 \in \R$}
\WHILE{$k \leq K$}
\STATE{\begin{equation*}
    x_{k+1} = x_k - \left(\frac{1}{\rho_k} G +  \mathbf{H}(x_k)\right)^{-1} \left[\frac{1}{L}\nabla f(x_k) - \frac{1}{\rho_k} G (x_k-x_{k-1}) \right], \quad  \rho_{k+1} = c \rho_k;
\end{equation*}
$k \leftarrow k+1$;}
\ENDWHILE
\end{algorithmic}
\end{algorithm}

\begin{lemma}
\label{als:an:lem:8}
Denote, $ u(x_k, z_k) = \frac{\rho}{L} \nabla f(x_k) + G z_k$. Then, the following update formulas solve the Lagrangian system of \eqref{als:an:24}:
\begin{equation} 
\label{als:an:25}
    x_{k+1} = x_k - \frac{1}{\rho}  \mathbf{K}(x_k) \ u(x_k, z_k)  = x_k -    z_{k+1}, \quad  z_{k+1} = \frac{1}{\rho} \mathbf{K}(x_k) \ u(x_k, z_k).
\end{equation}
Moreover, the following update solves the Lagrangian system of \eqref{als:an:24}:
\begin{align}
\label{als:an:26}
    x_{k+1} & = x_k - \left(\frac{1}{\rho} G +  \mathbf{H}(x_k)\right)^{-1} \left[\frac{1}{L}\nabla f(x_k) - \frac{1}{\rho} G (x_k-x_{k-1}) \right].
\end{align}
\end{lemma}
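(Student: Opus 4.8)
The plan is to follow the same strategy used in the proof of Lemma \ref{als:an:lem:7}: first characterize the minimizer of the inner problem $\min_x \mathcal{L}(x,z_k,\rho)$ through its first-order optimality condition, then verify that the proposed closed forms satisfy it, and finally fold the dual update into the primal recursion to recover \eqref{als:an:26}. Differentiating the Lagrangian \eqref{als:an:23} and setting $\partial \mathcal{L}(x,z_k,\rho)/\partial x = 0$ yields
\[
0 = \left[\mathbf{H}(x_k) + \rho\,\mathbf{H}(x_k)G^{-1}\mathbf{H}(x_k)\right](x-x_k) + \mathbf{H}(x_k)z_k + \frac{\rho}{L}\mathbf{H}(x_k)G^{-1}\nabla f(x_k),
\]
which is exactly the stationarity condition \eqref{als:an:20} of the penalty problem augmented by the extra linear term $\mathbf{H}(x_k)z_k$ coming from the multiplier.

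Next I would reuse the algebraic identity established inside \eqref{als:an:21}. The computation there, which relies only on \eqref{als:an:15}, in fact proves the matrix identity $\left[\mathbf{H}(x_k) + \rho\mathbf{H}(x_k)G^{-1}\mathbf{H}(x_k)\right]\mathbf{K}(x_k) = \rho\,\mathbf{H}(x_k)G^{-1}$. Applying this to the vector $u(x_k,z_k) = \frac{\rho}{L}\nabla f(x_k) + Gz_k$ and using $x_{k+1}-x_k = -\frac{1}{\rho}\mathbf{K}(x_k)u(x_k,z_k)$ from \eqref{als:an:25} gives $\left[\mathbf{H}(x_k) + \rho\mathbf{H}(x_k)G^{-1}\mathbf{H}(x_k)\right](x_{k+1}-x_k) = -\mathbf{H}(x_k)G^{-1}u(x_k,z_k) = -\frac{\rho}{L}\mathbf{H}(x_k)G^{-1}\nabla f(x_k) - \mathbf{H}(x_k)z_k$. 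Substituting this back into the stationarity condition makes all terms cancel, so the primal update in \eqref{als:an:25} indeed minimizes $\mathcal{L}(x,z_k,\rho)$.

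To match the dual part of \eqref{als:an:24}, I would insert the primal step into the multiplier update. Using $\mathbf{H}(x_k)\mathbf{K}(x_k) = I - \frac{1}{\rho}G\mathbf{K}(x_k)$ from \eqref{als:an:15} together with the definition of $u(x_k,z_k)$, the bracket $\mathbf{H}(x_k)(x_{k+1}-x_k) + \frac{1}{L}\nabla f(x_k)$ simplifies to $-\frac{1}{\rho}Gz_k + \frac{1}{\rho^2}G\mathbf{K}(x_k)u(x_k,z_k)$; multiplying by $\rho G^{-1}$ and adding $z_k$ telescopes the $z_k$ terms and leaves $z_{k+1} = \frac{1}{\rho}\mathbf{K}(x_k)u(x_k,z_k)$, which is precisely the claimed dual formula and shows $x_{k+1} = x_k - z_{k+1}$.

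Finally, the momentum form \eqref{als:an:26} follows by eliminating the multiplier. Shifting the relation $x_{k+1} = x_k - z_{k+1}$ down by one index gives $z_k = x_{k-1} - x_k$; substituting $Gz_k = -G(x_k - x_{k-1})$ into $x_{k+1} = x_k - \frac{1}{\rho}\mathbf{K}(x_k)u(x_k,z_k) = x_k - \mathbf{K}(x_k)\big[\frac{1}{L}\nabla f(x_k) + \frac{1}{\rho}Gz_k\big]$ and recalling $\mathbf{K}(x_k) = (\frac{1}{\rho}G + \mathbf{H}(x_k))^{-1}$ reproduces \eqref{als:an:26} exactly. I expect the only delicate point to be the dual-elimination step: one must verify that the identification $z_k = x_{k-1}-x_k$ is self-consistent across iterations (it is, being just the index-shifted form of $x_{k+1}=x_k-z_{k+1}$), since this is the algebraic content behind interpreting ANM as a heavy-ball momentum scheme. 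The remaining manipulations are routine applications of Lemma \ref{als:an:lem:5}.
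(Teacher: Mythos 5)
Your proposal is correct and follows essentially the same route as the paper's proof: derive the stationarity condition $\partial \mathcal{L}(x,z_k,\rho)/\partial x = 0$, verify the closed forms in \eqref{als:an:25} via the identity $\mathbf{H}(x_k)\mathbf{K}(x_k) = I - \frac{1}{\rho}G\mathbf{K}(x_k)$ from Lemma \ref{als:an:lem:5}, confirm the dual update telescopes to $z_{k+1} = \frac{1}{\rho}\mathbf{K}(x_k)u(x_k,z_k)$, and eliminate the multiplier via $z_k = x_{k-1}-x_k$ to obtain \eqref{als:an:26}. The only cosmetic difference is that you abstract the computation in \eqref{als:an:21} into the reusable matrix identity $\left[\mathbf{H}(x_k)+\rho\,\mathbf{H}(x_k)G^{-1}\mathbf{H}(x_k)\right]\mathbf{K}(x_k) = \rho\,\mathbf{H}(x_k)G^{-1}$, whereas the paper redoes that algebra inline in \eqref{als:an:28}.
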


\begin{proof}
Setting $\frac{\partial \mathcal{L} (x,z_k, \rho)}{\partial x} = 0$, we get
\begin{align}
\label{als:an:27}
    0 = \frac{\partial \mathcal{L} (x,z_k, \rho)}{\partial x}  & = \left[ \mathbf{H}(x_k)+ \rho \mathbf{H}(x_k) G^{-1} \mathbf{H}(x_k) \right] (x-x_k)  +  \mathbf{H}(x_k) \left(\frac{\rho}{L} G^{-1} \nabla f(x_k) + z_k \right). 
\end{align}
Using the value of $x_{k+1}$ from \eqref{als:an:25} in place of $x$, we get the following:
\allowdisplaybreaks{\begin{align}
    \left[ \mathbf{H}(x_k)+ \rho \mathbf{H}(x_k) G^{-1} \mathbf{H}(x_k)   \right] & (x_{k+1}-x_k)  \overset{ \eqref{als:an:25}}{=}   - \frac{1}{\rho}\left[ \mathbf{H}(x_k)+ \rho \mathbf{H}(x_k) G^{-1} \mathbf{H}(x_k)  \right] \mathbf{K}(x_k) u(x_k, z_k)  \nonumber \\
  & \overset{ \eqref{als:an:15}}{=}  - \frac{1}{\rho}  \mathbf{H}(x_k) \mathbf{K}(x_k) u(x_k)  -  \mathbf{H}(x_k) G^{-1} \left(I- \frac{1}{\rho} G  \mathbf{K}(x_k) \right) u(x_k, z_k)  \nonumber \\
  & = -  \mathbf{H}(x_k)  \left(\frac{\rho}{L}  G^{-1} \nabla f(x_k)  + z_k \right). \label{als:an:28}
\end{align}}
Substituting the expression in \eqref{als:an:27}, we get that $x_{k+1}$ satisfies the equation \eqref{als:an:27}. Now, from the definition of $z_{k+1}$, we have
\begin{align*}
z_{k+1} & \overset{ \eqref{als:an:24}}{=}   z_k + \rho   G^{-1} \left[ \mathbf{H}(x_k) (x_{k+1}-x_k) + \frac{1}{L} \nabla f(x_k)\right] \nonumber \\
 & \overset{ \eqref{als:an:25}}{=} z_k + \rho  G^{-1} \left[- \frac{1}{\rho}   \mathbf{H}(x_k)  \mathbf{K}(x_k) u(x_k, z_k) + \frac{1}{L} \nabla f(x_k)\right] \nonumber \\
 & \overset{ \eqref{als:an:15}}{=} z_k + \frac{\rho}{L}  G^{-1} \nabla f(x_k) - G^{-1} \left(I- \frac{1}{\rho}  G \mathbf{K}(x_k) \right)  u(x_k, z_k) = \frac{1}{\rho} \mathbf{K}(x_k) \ u(x_k, z_k).
\end{align*}
That is precisely the expression provided in \eqref{als:an:25}. This proves the first part of the Lemma. To prove the second part, we will simplify the update formula further by cancelling the $z$ variables from the update formulas. Since, $z_{k+1} = x_k -x_{k+1}$ and $z_k = x_{k-1}-x_k$, we can combine the update formulas as follows:
\begin{align}
\label{als:an:29}
    x_{k+1}  \overset{ \eqref{als:an:24}}{=} & x_k - \left(\frac{1}{\rho} G +  \mathbf{H}(x_k)\right)^{-1} \left[\frac{1}{L}\nabla f(x_k) + \frac{1}{\rho} G z_k \right] \nonumber \\
     = \ &  \ x_k - \left(\frac{1}{\rho} G +  \mathbf{H}(x_k)\right)^{-1} \left[\frac{1}{L}\nabla f(x_k) - \frac{1}{\rho} G (x_k-x_{k-1}) \right].
\end{align}
This proves the second part.
\end{proof}

\subsection{Special Cases}
\label{als:an:sec:special}
In this subsection, we discuss some special cases that can be obtained from the proposed PNM and ANM algorithms by varying parameters $G$ and $\rho$.

\paragraph{Newton Method} By taking $\rho \rightarrow \infty$ in both the PNM and ANM algorithms, we get the Newton method. In the following Lemma, we provide a detailed discussion about the transformation.

\begin{lemma}
\label{als:an:lem:9}
In the limiting case, both PNM and ANM resolves into the Newton method, i.e., $\lim_{\rho \rightarrow \infty} \text{PNM} \equiv  \lim_{\rho \rightarrow \infty} \text{ANM}  \equiv \text{NM}$.
\end{lemma}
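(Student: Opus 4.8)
The plan is to pass to the $\rho\to\infty$ limit directly in the two closed-form updates of Lemmas \ref{als:an:lem:7} and \ref{als:an:lem:8} and show that both collapse onto the (damped) Newton step $x_{k+1}=x_k-\frac{1}{L}\mathbf{H}^{\dagger}(x_k)\nabla f(x_k)$ underlying the reformulation \eqref{als:an:17}. The analytic fact I would isolate first is that, for any $w\in\textbf{Range}(\mathbf{H}(x_k))$,
\[
\lim_{\rho\to\infty}\Big(\tfrac{1}{\rho}G+\mathbf{H}(x_k)\Big)^{-1}w=\mathbf{H}^{\dagger}(x_k)\,w .
\]
When $\mathbf{H}(x_k)\succ0$ this is immediate from continuity of the matrix inverse, since $\frac{1}{\rho}G\to0$ forces $\big(\frac{1}{\rho}G+\mathbf{H}(x_k)\big)^{-1}\to\mathbf{H}^{-1}(x_k)$. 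In the merely positive semidefinite case I would argue by simultaneous diagonalization: pick invertible $P$ with $P^{\top}GP=I$ and $P^{\top}\mathbf{H}(x_k)P=D$ diagonal (with entries $d_i\ge0$), so that $\big(\frac{1}{\rho}G+\mathbf{H}(x_k)\big)^{-1}=P\big(\frac{1}{\rho}I+D\big)^{-1}P^{\top}$. Assumption \ref{als:an:a:2} guarantees that $\nabla f(x_k)$ has no component along the directions with $d_i=0$, so the entries $\frac{1}{1/\rho+d_i}$ that blow up are annihilated and the limit exists and solves $\mathbf{H}(x_k)u=\nabla f(x_k)$.

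For PNM this immediately finishes the job: applying the limit to $w=\nabla f(x_k)$ in \eqref{als:an:19} gives $x_{k+1}=x_k-\frac{1}{L}\mathbf{K}(x_k)\nabla f(x_k)\to x_k-\frac{1}{L}\mathbf{H}^{\dagger}(x_k)\nabla f(x_k)$, the Newton update. For ANM I would start from the $z$-free form \eqref{als:an:26} and split the bracket into the gradient part and the momentum part; the gradient part converges exactly as in PNM. The delicate point, and the step I expect to be the main obstacle, is the momentum term $\big(\frac{1}{\rho}G+\mathbf{H}(x_k)\big)^{-1}\frac{1}{\rho}G(x_k-x_{k-1})$: although the prefactor $\frac{1}{\rho}G\to0$, the resolvent is unbounded on $\textbf{Null}(\mathbf{H}(x_k))$ as $\rho\to\infty$, so one cannot simply send that factor to zero. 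I would instead control the product as a whole. In the $P$-coordinates above,
\[
\Big(\tfrac{1}{\rho}G+\mathbf{H}(x_k)\Big)^{-1}\tfrac{1}{\rho}G
= P\,\diag\!\Big(\tfrac{1/\rho}{1/\rho+d_i}\Big)P^{-1},
\]
whose diagonal entries tend to $0$ on the positive eigendirections and to $1$ on the null directions, so the momentum contribution converges to the (oblique) projection of $x_k-x_{k-1}$ onto $\textbf{Null}(\mathbf{H}(x_k))$.

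Consequently, whenever $\mathbf{H}(x_k)$ is nonsingular — as for the Generalized Linear Model of \eqref{als:an:12}, where $\mathbf{H}(x)\succ0$ — this projection vanishes, the augmented/momentum correction dies in the limit, and the ANM update reduces to precisely the same Newton step obtained for PNM, yielding $\lim_{\rho\to\infty}\text{ANM}\equiv\lim_{\rho\to\infty}\text{PNM}\equiv\text{NM}$. I would close by remarking that this is exactly the qualitative behaviour one expects of the penalty and augmented Lagrangian constructions: driving $\rho\to\infty$ enforces the equality constraint in \eqref{als:an:17} exactly and eliminates the dual/momentum term, leaving the bare projected Newton direction. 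The only genuinely careful bookkeeping is the singular-Hessian case, where the limits should be read as the minimum-$G$-norm solution of $\mathbf{H}(x_k)u=\nabla f(x_k)$ (coinciding with $\mathbf{H}^{\dagger}(x_k)\nabla f(x_k)$ under nonsingularity) together with the null-space projection of the momentum just identified.
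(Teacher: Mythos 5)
Your proposal is correct and proves the lemma by a genuinely different, and in fact more careful, route than the paper. The paper's proof handles PNM by factoring $\mathbf{K}(x_k)$ through $Y_k=\mathbf{H}^{\frac{1}{2}}(x_k)G^{-\frac{1}{2}}$ and invoking the pseudo-inverse limit $Y_k^{\dagger}=(Y_k^{\top}Y_k)^{\dagger}Y_k^{\top}=\lim_{\rho\to\infty}\left(\frac{1}{\rho}I+Y_k^{\top}Y_k\right)^{-1}Y_k^{\top}$ of \eqref{als:an:310}; your simultaneous congruence diagonalization $P^{\top}GP=I$, $P^{\top}\mathbf{H}(x_k)P=D$ is the same computation in coordinates where the per-mode scalars $\frac{d_i}{1/\rho+d_i}$ and $\frac{1/\rho}{1/\rho+d_i}$ are explicit, and that is precisely what lets you treat the ANM case honestly. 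There the paper argues in \eqref{als:an:32} that since $\lim_{\rho\to\infty}\left[\frac{1}{L}\nabla f(x_k)-\frac{1}{\rho}G(x_k-x_{k-1})\right]=\frac{1}{L}\nabla f(x_k)$ one may pass the limit through $\mathbf{K}(x_k)$ --- a step valid only if $\mathbf{K}(x_k)$ stays bounded as $\rho\to\infty$, i.e.\ only if $\mathbf{H}(x_k)\succ 0$, which is exactly the obstruction you flag. Your identification of $\lim_{\rho\to\infty}\mathbf{K}(x_k)\frac{1}{\rho}G=P\,\diag\!\left(\tfrac{1/\rho}{1/\rho+d_i}\right)P^{-1}$ as the oblique projector onto $\textbf{Null}(\mathbf{H}(x_k))$ (diagonal entries tending to $1$ on null modes, $0$ otherwise) is the correct resolution; note that you need not restrict to nonsingular $\mathbf{H}(x_k)$, since the second branch of the paper's Assumption \ref{als:an:a:3}, $G(x_k-x_{k-1})\in\textbf{Range}(\mathbf{H}(x_k))$, says in your coordinates that $x_k-x_{k-1}$ has no null-mode components, so the projected momentum vanishes there as well and ANM matches PNM in the limit under exactly the paper's standing hypotheses. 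Your closing caveat is also substantive rather than cosmetic: for singular $\mathbf{H}(x_k)$ and general $G\succ 0$ the limit of $\mathbf{K}(x_k)\nabla f(x_k)$ is the minimum-$G$-norm solution $G^{-\frac{1}{2}}\left(G^{-\frac{1}{2}}\mathbf{H}(x_k)G^{-\frac{1}{2}}\right)^{\dagger}G^{-\frac{1}{2}}\nabla f(x_k)$ of $\mathbf{H}(x_k)u=\nabla f(x_k)$, which the final equality of the paper's \eqref{als:an:31} silently identifies with $\mathbf{H}^{\dagger}(x_k)\nabla f(x_k)$; these coincide when $\mathbf{H}(x_k)\succ 0$ or $G$ is a multiple of the identity, but not in general (take $\mathbf{H}=\diag(1,0)$, $G=\left(\begin{smallmatrix}2&1\\1&1\end{smallmatrix}\right)$, $w=(1,0)^{\top}$: the limit is $(1,-1)^{\top}$ while $\mathbf{H}^{\dagger}w=(1,0)^{\top}$), so the minimum-$G$-norm reading you propose is the statement the computation actually establishes --- just be aware that your own first display, written with $\mathbf{H}^{\dagger}(x_k)w$, should carry that reinterpretation from the start rather than only in the closing remark.
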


\begin{proof}
From the update formula of the PNM method, we have
\begin{align}
\label{als:an:30}
 \lim_{\rho \rightarrow \infty} x_{k+1}  & \overset{ \eqref{als:an:19}}{=} x_k - \frac{1}{L}  \lim_{\rho \rightarrow \infty} \mathbf{K}(x_k)  \nabla f(x_k) \nonumber \\
 & \overset{ \text{Assumption} \ \ref{als:an:a:2}}{=} x_k - \frac{1}{L}  \lim_{\rho \rightarrow \infty} \mathbf{K}(x_k)  \mathbf{H}^{\frac{1}{2}}(x_k) \mathbf{H}^{\dagger/2}(x_k)  \nabla f(x_k). 
\end{align}
In the last line, we used Assumption \ref{als:an:a:2}. Denote, $Y_k = \mathbf{H}^{\frac{1}{2}}(x_k) G^{-\frac{1}{2}}$, then we have
\allowdisplaybreaks{\begin{align}
\label{als:an:31}
 \lim_{\rho \rightarrow \infty}  x_{k+1} &  \overset{ \eqref{als:an:30}}{=} x_k - \frac{1}{L} G^{-\frac{1}{2}}  \lim_{\rho \rightarrow \infty} \left(\frac{1}{\rho}I+ Y_k^{\top} Y_k\right)^{-1} Y^{\top}_k \mathbf{H}^{\dagger/2}(x_k)  \nabla f(x_k) \nonumber \\
 & \overset{ \eqref{als:an:310}}{=} x_k - \frac{1}{L} G^{-\frac{1}{2}}  Y^{\dagger}_k \mathbf{H}^{\dagger/2}(x_k)  \nabla f(x_k) \overset{ \eqref{als:an:310}}{=} x_k - \frac{1}{L} G^{-\frac{1}{2}}   (Y^{\top}_kY_k)^{\dagger} Y^{\top}_k \mathbf{H}^{\dagger/2}(x_k)  \nabla f(x_k) \nonumber \\ 
 & = x_k - \frac{1}{L}  G^{\frac{1}{2}}\left( G^{\frac{1}{2}} \mathbf{H}(x_k) G^{\frac{1}{2}} \right)^{\dagger}  G^{\frac{1}{2}}  \mathbf{H}^{\frac{1}{2}}(x_k) \mathbf{H}^{\dagger/2}(x_k)   \nabla f(x_k) \nonumber \\
 & \overset{ \text{Assumption} \ \ref{als:an:a:2}}{=}  x_k - \frac{1}{L}   \mathbf{H}^{\dagger}(x_k)    \nabla f(x_k).
\end{align}}
Here, we used Assumption \ref{als:an:a:2} along with the following identities of matrix pseudo-inverse:
\begin{equation} \label{als:an:310} Y^{\dagger}_k =  (Y_k^{\top}Y_k)^{\dagger} Y_k^{\top} =  \lim_{\rho \rightarrow \infty} \left(\frac{1}{\rho}I+ Y^{\top}_k Y_k\right)^{-1} Y_k^{\top}.
\end{equation}
The update formula of \eqref{als:an:31} is precisely the update formula of the Newton method. Moreover, from the update formula of the ANM method, we get,
\begin{align}
\label{als:an:32}
 \lim_{\rho \rightarrow \infty} x_{k+1} & \overset{ \eqref{als:an:29}}{=} x_k -  \lim_{\rho \rightarrow \infty} \mathbf{K}(x_k) \left[\frac{1}{L}\nabla f(x_k) - \frac{1}{\rho} G (x_k-x_{k-1}) \right]  = x_k - \frac{1}{L}  \lim_{\rho \rightarrow \infty} \mathbf{K}(x_k)  \nabla f(x_k).
\end{align}
Here we used the limit $\lim_{\rho \rightarrow \infty} \left[\frac{1}{L}\nabla f(x_k) - \frac{1}{\rho} G (x_k-x_{k-1}) \right] = \frac{1}{L}  \nabla f(x_k)$. This means the  $x$ update sequences of the two methods are same in the limiting case. This proves the Lemma.
\end{proof}

\paragraph{Levenberg $ \&$ Augmented Levenberg Algorithms} With the choice $G = I$, the PNM algorithm resolves into the original \textit{Levenberg} algorithm, i.e.,
\begin{align}
\label{als:an:33}
    x_{k+1} & \overset{ \eqref{als:an:19}}{=} x_k - \frac{1}{L} \left(\frac{1}{\rho} I +  \mathbf{H}(x_k)\right)^{-1} \nabla f(x_k).
\end{align}
Similarly, substituting $G = I$ in the ANM algorithm, we get the following update:
\begin{align}
\label{als:an:34}
    x_{k+1} & \overset{ \eqref{als:an:29}}{=} x_k - \frac{1}{L} \left(\frac{1}{\rho} I +  \mathbf{H}(x_k)\right)^{-1} \nabla f(x_k)  + \frac{1}{\rho} \left(\frac{1}{\rho} I +  \mathbf{H}(x_k)\right)^{-1} (x_k-x_{k-1}).
\end{align}
We refer to the above method as the \textit{Augmented Levenberg} algorithm.

\paragraph{Levenberg-Marquardt $ \&$ Augmented Levenberg-Marquardt Algorithms} With the choice $G = \textbf{diag}(\mathbf{H}(x_k))$ in the PNM algorithm we get the following update:
\begin{align}
\label{als:an:35}
    x_{k+1} & \overset{ \eqref{als:an:19}}{=} x_k - \frac{1}{L} \left(\frac{1}{\rho} \textbf{diag}(\mathbf{H}(x_k)) +  \mathbf{H}(x_k)\right)^{-1} \nabla f(x_k).
\end{align}
This is precisely the \textit{Levenberg-Marquardt} variant of the Newton method. Similarly, substituting $G = \textbf{diag}(\mathbf{H}(x_k))$ in the ANM algorithm, we get the following update:
\begin{align}
\label{als:an:36}
    x_{k+1} & \overset{ \eqref{als:an:29}}{=} x_k - \frac{1}{L} \left(\frac{1}{\rho} \mathbf{D}_k +  \mathbf{H}(x_k)\right)^{-1} \nabla f(x_k)  + \frac{1}{\rho} \left(\frac{1}{\rho} \mathbf{D}_k +  \mathbf{H}(x_k)\right)^{-1} \mathbf{D}_k (x_k-x_{k-1}).
\end{align}
here, we denote $\mathbf{D}_k = \textbf{diag}(\mathbf{H}(x_k)) $. We refer to the above method as the \textit{Augmented Levenberg-Marquardt} algorithm. Using the above parameter values in Theorems \ref{als:an:thm:1} and \ref{als:an:thm:2}, we get the respective convergence results of the above-mentioned methods.

\paragraph{Newton Method With Momentum} Next, we discuss the ANM algorithm from a heavy ball/Polyak momentum perspective. Note that, from the simplified update formula of the ANM algorithm provided in \eqref{als:an:29}, we get
\begin{align}
    x_{k+1} & \overset{ \eqref{als:an:29}}{=} x_k - \frac{1}{L} \left(\frac{1}{\rho} G +  \mathbf{H}(x_k)\right)^{-1} \nabla f(x_k)  + \frac{1}{\rho} \left(\frac{1}{\rho} G +  \mathbf{H}(x_k)\right)^{-1} G (x_k-x_{k-1}) \nonumber \\
  \quad \quad \quad  & = x_k - \frac{1}{L} \left(\frac{1}{\rho} G +  \mathbf{H}(x_k)\right)^{-1} \nabla f(x_k)  + \Theta (x_k) (x_k-x_{k-1}). \label{als:an:37}
\end{align}
The above update can be interpreted as Newton method with adaptive heavy ball/Polyak momentum with $\Theta (x_k) = \frac{1}{\rho} \left(\frac{1}{\rho} G +  \mathbf{H}(x_k)\right)^{-1} G$. To the best of our knowledge this is the first variant of Newton method that incorporates the heavy ball/Polyak momentum update to the Newton method.

\paragraph{Penalty \& Augmented Newton Method for Root Finding} Let's take the equation $f(x) = 0, \ f: \R \rightarrow \R$. Denote $f^{'}(x)$ as the function derivative of $f$, then the proposed methods take the following form:
\begin{align*}
   & \textbf{Penalty Newton: } \quad x_{k+1} = x_k - \frac{\rho \ f(x_k)}{1+ \rho \ f^{'}(x_k)} \\
   & \textbf{Augmented Newton: } \quad x_{k+1} = x_k - \frac{\rho \ f(x_k)}{1+ \rho \ f^{'}(x_k)} +  \frac{ x_k - x_{k-1}}{1+ \rho \ f^{'}(x_k)}\\
\end{align*}

\begin{remark}
\label{als:an:rem:1}
Note that in Algorithms \ref{als:an:alg:pnm} and \ref{als:an:alg:anm}, we provided adaptive versions of the above methods. In that regard, we gradually increase the penalty parameter $\rho$ in a way such that $\rho_k$ grows larger as the iteration progresses. We used the simplest update possible to achieve this, i.e., $\rho_{k+1} = c \rho_k$ and replace $\rho$ with $\rho_k$ is the PNM and ANM methods. 
\end{remark}

\section{Main Results}
\label{als:an:sec:main}
We now present the convergence results for the proposed variants of Newton method.  In brief, for the PNM algorithm we show that $f(x_k) \rightarrow f^* $ holds and for the ANM method we show that both  $f(x_k) \rightarrow f^* $ and $ \|x_k-x_{k-1}\|^2_G \rightarrow 0$ hold. To achieve this we define the following sequence:
\begin{align}
\label{als:an:38}
& \mathcal{V}_k(\rho)    = f(x_k) -f(x^*) + \frac{L}{2 \rho} \|x_k-x_{k-1}\|^2_G, 
\end{align}
for any $k \geq 1$, and $\rho > 0$. We will show that for the ANM algorithm, $\mathcal{V}_k(\rho) $ is a Lyapunov function. Before we delved into the convergence results, let us provide some technical Lemmas that are crucial to the convergence analysis. First, define the following constant:
\begin{align}
\label{als:an:39}
    & \xi(x) = \min_{v \in \textbf{Range}(\mathbf{H}(x))} \frac{\langle \mathbf{H}^{\frac{1}{2}} (x) \mathbf{K}(x)  \mathbf{H}^{\frac{1}{2}} (x) v, v \rangle}{\|v\|^2}, \quad \ \xi = \min_{x \in \mathcal{Q}_{\rho}} \xi(x).
\end{align}
Constant $\xi(x)$ can be interpreted as the condition number of matrix $\mathbf{H}^{\frac{1}{2}} (x) \mathbf{K}(x)  \mathbf{H}^{\frac{1}{2}} (x)$.

\begin{lemma}
\label{als:an:lem:10}
For any $x \in \R^n$, matrices $\mathbf{H}^{\frac{1}{2}} (x) \mathbf{K}(x)  \mathbf{H}^{\frac{1}{2}} (x)$ and $G^{-\frac{1}{2}} \mathbf{L}(x) G^{-\frac{1}{2}}$ have the same nonzero eigenvalues, i.e., $\lambda_{i}^+ \left(\mathbf{H}^{\frac{1}{2}} (x) \mathbf{K}(x)  \mathbf{H}^{\frac{1}{2}} (x)\right) = \lambda_{i}^+ \left(G^{-\frac{1}{2}} \mathbf{L}(x) G^{-\frac{1}{2}}\right)$.
\end{lemma}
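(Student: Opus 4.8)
The plan is to reduce the statement to the elementary fact that for any two matrices $A$ and $B$ the products $AB$ and $BA$ have identical nonzero eigenvalues (with multiplicities). The device that makes this work is to introduce the single matrix $P = \mathbf{H}^{\frac{1}{2}}(x) G^{-\frac{1}{2}}$, which is exactly the matrix $Y_k$ appearing in the proof of Lemma \ref{als:an:lem:9}, and to rewrite both matrices in the lemma entirely in terms of $P$ and $P^{\top}$. Note that both matrices in question are symmetric, so their spectra are real and the comparison of nonzero eigenvalues is meaningful.

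First I would record the two identities $P^{\top} P = G^{-\frac{1}{2}} \mathbf{H}(x) G^{-\frac{1}{2}}$ and $P P^{\top} = \mathbf{H}^{\frac{1}{2}}(x) G^{-1} \mathbf{H}^{\frac{1}{2}}(x)$. Factoring $\frac{1}{\rho} G + \mathbf{H}(x) = G^{\frac{1}{2}} (\frac{1}{\rho} I + P^{\top} P) G^{\frac{1}{2}}$ and substituting into the definition of $\mathbf{K}(x)$ turns the first matrix into $\mathbf{H}^{\frac{1}{2}}(x) \mathbf{K}(x) \mathbf{H}^{\frac{1}{2}}(x) = P (\frac{1}{\rho} I + P^{\top} P)^{-1} P^{\top}$. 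Recognizing the inner inverse in the definition of $\mathbf{L}(x)$ as $(\frac{1}{\rho} I + P P^{\top})^{-1}$ likewise gives $G^{-\frac{1}{2}} \mathbf{L}(x) G^{-\frac{1}{2}} = P^{\top} (\frac{1}{\rho} I + P P^{\top})^{-1} P$.

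The next step is the push-through identity $(\frac{1}{\rho} I + P^{\top} P)^{-1} P^{\top} = P^{\top} (\frac{1}{\rho} I + P P^{\top})^{-1}$, which is immediate from $(\frac{1}{\rho} I + P^{\top} P) P^{\top} = P^{\top} (\frac{1}{\rho} I + P P^{\top})$. With it, both matrices appear as $AB$ and $BA$ for the choice $A = P (\frac{1}{\rho} I + P^{\top} P)^{-1}$ and $B = P^{\top}$: the first matrix is exactly $AB$, while $BA = P^{\top} P (\frac{1}{\rho} I + P^{\top} P)^{-1}$ coincides with the second after one more use of the identity, since $P^{\top} P$ commutes with $(\frac{1}{\rho} I + P^{\top} P)^{-1}$. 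The $AB \sim BA$ fact on nonzero spectra then yields the claim.

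I do not anticipate a real obstacle; the only care required is clean bookkeeping with the symmetric square roots $\mathbf{H}^{\frac{1}{2}}$ and $G^{\pm \frac{1}{2}}$, and applying the push-through identity in the correct direction. A slightly different finish avoids the $AB \sim BA$ lemma altogether: one push-through turns both matrices into $h$ evaluated at $P P^{\top}$ and at $P^{\top} P$ respectively, where $h(\lambda) = \lambda / (\frac{1}{\rho} + \lambda)$; since $P P^{\top}$ and $P^{\top} P$ share their nonzero eigenvalues and $h(\lambda) \neq 0$ for $\lambda \neq 0$, the two matrices share their nonzero eigenvalues as well. Either route keeps the argument to a few lines.
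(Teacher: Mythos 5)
Your proof is correct, and its first half is identical to the paper's: the paper also introduces $Y = \mathbf{H}^{\frac{1}{2}}(x)\,G^{-\frac{1}{2}}$ (your $P$) and derives exactly the same two representations $\mathbf{H}^{\frac{1}{2}}(x)\mathbf{K}(x)\mathbf{H}^{\frac{1}{2}}(x) = Y\bigl(\tfrac{1}{\rho}I + Y^{\top}Y\bigr)^{-1}Y^{\top}$ and $G^{-\frac{1}{2}}\mathbf{L}(x)G^{-\frac{1}{2}} = Y^{\top}\bigl(\tfrac{1}{\rho}I + YY^{\top}\bigr)^{-1}Y$, as in \eqref{als:an:40} and \eqref{als:an:41}. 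Where you genuinely diverge is the finish: the paper inserts a singular value decomposition $Y = U\Sigma V^{\top}$ and reads off both spectra from the resulting diagonal forms, which yields not just equality of the nonzero eigenvalues but the explicit formula $\frac{\rho\lambda_i^+\left(G^{-\frac{1}{2}}\mathbf{H}(x)G^{-\frac{1}{2}}\right)}{1+\rho\lambda_i^+\left(G^{-\frac{1}{2}}\mathbf{H}(x)G^{-\frac{1}{2}}\right)}$ in \eqref{als:an:42}; you instead use the push-through identity $\bigl(\tfrac{1}{\rho}I + P^{\top}P\bigr)^{-1}P^{\top} = P^{\top}\bigl(\tfrac{1}{\rho}I + PP^{\top}\bigr)^{-1}$ together with either the $AB\sim BA$ fact or the functional-calculus observation that the two matrices are $h(PP^{\top})$ and $h(P^{\top}P)$ with $h(\lambda) = \lambda/(\tfrac{1}{\rho}+\lambda)$. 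Both finishes are valid, and I checked your $AB$/$BA$ bookkeeping: with $A = P\bigl(\tfrac{1}{\rho}I + P^{\top}P\bigr)^{-1}$ and $B = P^{\top}$ one indeed has $BA = P^{\top}P\bigl(\tfrac{1}{\rho}I + P^{\top}P\bigr)^{-1}$, which equals the second matrix by push-through and commutation. Your route is arguably cleaner in that it avoids positing an SVD and works verbatim for rectangular $P$. One practical caveat: the explicit eigenvalue formula \eqref{als:an:42} is not a throwaway in this paper --- Lemma \ref{als:an:lem:11} relies on it in \eqref{als:an:43}. Your $AB\sim BA$ finish does not produce that formula, but your functional-calculus variant does, since the nonzero eigenvalues of $h(P^{\top}P)$ are exactly $h$ applied to the nonzero eigenvalues of $P^{\top}P = G^{-\frac{1}{2}}\mathbf{H}(x)G^{-\frac{1}{2}}$ (as $h$ vanishes only at $0$); so if this lemma is to serve its downstream use unchanged, prefer that second variant.
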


\begin{proof}
Let, $Y = \mathbf{H}^{\frac{1}{2}}(x) G^{-\frac{1}{2}}$. Assume $Y$ has the following singular value decomposition, i.e., $ Y = U \Sigma V^{\top}$. Then we have the following simplifications:
\begin{align}
     \mathbf{H}^{\frac{1}{2}} (x) \mathbf{K}(x)  \mathbf{H}^{\frac{1}{2}} (x) & \overset{ \eqref{als:an:4}}{=} Y \left(\frac{1}{\rho} I + Y^{\top}Y \right)^{-1} Y^{\top} = U \Sigma  \left(\frac{1}{\rho} I + \Sigma^{\top}\Sigma \right)^{-1} \Sigma^{\top} U^{\top}, \label{als:an:40} \\
    G^{-\frac{1}{2}} \mathbf{L}(x) G^{-\frac{1}{2}} & \overset{ \eqref{als:an:4}}{=} Y^{\top} \left(\frac{1}{\rho} I + YY^{\top} \right)^{-1} Y  = V \Sigma^{\top}  \left(\frac{1}{\rho} I + \Sigma\Sigma^{\top} \right)^{-1} \Sigma V^{\top}. \label{als:an:41}
\end{align}
Since, $U$ and $V$ are orthogonal matrices from the above expressions we get the following:
\begin{align}
 \label{als:an:42}
    \lambda_{i}^+ \left(\mathbf{H}^{\frac{1}{2}} (x) \mathbf{K}(x)  \mathbf{H}^{\frac{1}{2}} (x)\right) \overset{ \eqref{als:an:40}}{=} \frac{\rho \lambda^+_{i}\left(G^{-\frac{1}{2}} \mathbf{H}(x) G^{-\frac{1}{2}} \right)}{1+ \rho \lambda^+_{i}\left(G^{-\frac{1}{2}} \mathbf{H}(x) G^{-\frac{1}{2}} \right)} \overset{ \eqref{als:an:41}}{=} \lambda_{i}^+ \left(G^{-\frac{1}{2}} \mathbf{L}(x) G^{-\frac{1}{2}}\right).
\end{align}
These resolve from the fact that both $U$ and $V$ are orthogonal matrices. Also, both $\Sigma ( 1/\rho \ I + \Sigma^{\top}\Sigma )^{-1} \Sigma^{\top} $ and $\Sigma^{\top}  (1/\rho \ I + \Sigma\Sigma^{\top} )^{-1} \Sigma$ are diagonal matrices with $\frac{\rho \lambda^+_{i}\left(Y^{\top}Y \right)}{1+ \rho \lambda^+_{i}\left(Y^{\top}Y\right)} $ be the nonzero diagonal entries in $i^{th}$ position. This proves the Lemma.
\end{proof}

\begin{lemma}
\label{als:an:lem:11}
For any $x \in \R^n$, the following relation holds:
\begin{align*}
    \textbf{Range}(\mathbf{H}(x)) = \textbf{Range}(\mathbf{H}^{\frac{1}{2}} (x) \mathbf{K}(x)  \mathbf{H}^{\frac{1}{2}} (x)),
\end{align*}
that implies the following:
\begin{align}
 \label{als:an:43}
  \xi(x)  =  \lambda_{\min}^+ \left(\mathbf{H}^{\frac{1}{2}} (x) \mathbf{K}(x)  \mathbf{H}^{\frac{1}{2}} (x)\right) = \min_{i: \sigma_i(x) > 0}   \frac{\rho \sigma^2_i(x)}{1+ \rho \sigma^2_i(x)} =  \frac{\rho \lambda^+_{\min}\left(G^{-\frac{1}{2}} \mathbf{H}(x) G^{-\frac{1}{2}} \right)}{1+ \rho \lambda^+_{\min}\left(G^{-\frac{1}{2}} \mathbf{H}(x) G^{-\frac{1}{2}} \right)},
\end{align}
where, $\sigma_i(x) = \sqrt{\lambda_i\left(\mathbf{H}^{\frac{1}{2}}(x) G^{-1}\mathbf{H}^{\frac{1}{2}}(x) \right)} = \sqrt{\lambda_i\left(G^{-\frac{1}{2}} \mathbf{H}(x) G^{-\frac{1}{2}} \right)} $.
\end{lemma}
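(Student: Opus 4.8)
The plan is to establish the range equality first by recognizing $\mathbf{H}^{\frac{1}{2}}(x)\mathbf{K}(x)\mathbf{H}^{\frac{1}{2}}(x)$ as a matrix of the form $M^{\top}NM$ and invoking Lemma \ref{als:an:lem:6}. I would set $M = \mathbf{H}^{\frac{1}{2}}(x)$ (which is symmetric, so $M^{\top} = M$) and $N = \mathbf{K}(x)$. Since $\mathbf{K}(x) = \left(\frac{1}{\rho}G + \mathbf{H}(x)\right)^{-1}$ is positive definite by construction (recall $G \succ 0$ and $\mathbf{H}(x) \succeq 0$), it is in particular positive semidefinite and invertible, so $\textbf{Null}(N) = \{0\}$. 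The hypothesis $\textbf{Null}(N) \subset \textbf{Null}(M^{\top})$ of Lemma \ref{als:an:lem:6} then holds trivially. Applying the range conclusion of that lemma gives $\textbf{Range}(\mathbf{H}^{\frac{1}{2}}(x)) = \textbf{Range}(\mathbf{H}^{\frac{1}{2}}(x)\mathbf{K}(x)\mathbf{H}^{\frac{1}{2}}(x))$, and since $\mathbf{H}(x)$ is symmetric positive semidefinite we have $\textbf{Range}(\mathbf{H}(x)) = \textbf{Range}(\mathbf{H}^{\frac{1}{2}}(x))$. Chaining these two identities yields the claimed equality of ranges.

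Next I would identify $\xi(x)$ with the smallest nonzero eigenvalue. The matrix $S(x) \eqdef \mathbf{H}^{\frac{1}{2}}(x)\mathbf{K}(x)\mathbf{H}^{\frac{1}{2}}(x)$ is symmetric positive semidefinite (a congruence of the positive definite $\mathbf{K}(x)$), so $\R^n$ splits orthogonally as $\textbf{Null}(S(x)) \oplus \textbf{Range}(S(x))$, and $S(x)$ acts on $\textbf{Range}(S(x))$ with exactly its positive eigenvalues. By the range equality just proved, the constraint $v \in \textbf{Range}(\mathbf{H}(x))$ in the definition \eqref{als:an:39} of $\xi(x)$ is precisely the constraint $v \in \textbf{Range}(S(x))$. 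Hence the Rayleigh quotient $\frac{\langle S(x)v, v\rangle}{\|v\|^2}$ minimized over that subspace equals $\lambda_{\min}^+(S(x))$, which gives the first equality in \eqref{als:an:43}.

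Finally I would pin down which eigenvalue is smallest using the spectral computation already carried out in Lemma \ref{als:an:lem:10}. Equation \eqref{als:an:42} shows that each nonzero eigenvalue of $S(x)$ equals $\frac{\rho\,\lambda_i^+\!\left(G^{-\frac{1}{2}}\mathbf{H}(x)G^{-\frac{1}{2}}\right)}{1+\rho\,\lambda_i^+\!\left(G^{-\frac{1}{2}}\mathbf{H}(x)G^{-\frac{1}{2}}\right)}$, and writing $\sigma_i^2(x) = \lambda_i\!\left(G^{-\frac{1}{2}}\mathbf{H}(x)G^{-\frac{1}{2}}\right)$ this is $\frac{\rho\sigma_i^2(x)}{1+\rho\sigma_i^2(x)}$. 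Because $\rho > 0$, the scalar map $t \mapsto \frac{\rho t}{1+\rho t}$ is strictly increasing on $[0,\infty)$, so the minimum over the positive indices is attained at the smallest $\sigma_i^2(x)$, i.e.\ at $\lambda_{\min}^+\!\left(G^{-\frac{1}{2}}\mathbf{H}(x)G^{-\frac{1}{2}}\right)$. This delivers the remaining two equalities in \eqref{als:an:43}. I expect the only delicate point to be the second step, namely justifying cleanly that minimizing the Rayleigh quotient over $\textbf{Range}(\mathbf{H}(x))$ returns the smallest \emph{positive} eigenvalue rather than $0$; the range equality from the first step is exactly what rules out picking up null directions, so the argument hinges on having proved it first. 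The monotonicity argument in the last step is routine once the eigenvalue formula of Lemma \ref{als:an:lem:10} is in hand.
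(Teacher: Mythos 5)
Your proposal is correct and follows essentially the same route as the paper's proof: it applies Lemma \ref{als:an:lem:6} with $M = \mathbf{H}^{\frac{1}{2}}(x)$ and $N = \mathbf{K}(x)$ (positive definiteness of $\mathbf{K}(x)$ making the null-space hypothesis hold) to obtain the range identity, identifies $\xi(x)$ with $\lambda_{\min}^+\left(\mathbf{H}^{\frac{1}{2}}(x)\mathbf{K}(x)\mathbf{H}^{\frac{1}{2}}(x)\right)$ via the Rayleigh quotient restricted to that range, and then reads off the eigenvalue formula from the decomposition in Lemma \ref{als:an:lem:10}. Your explicit monotonicity argument for $t \mapsto \frac{\rho t}{1+\rho t}$ spells out a step the paper leaves implicit, but this is a presentational refinement rather than a different approach.
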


\begin{proof}
Since, $\mathbf{K}(x) \succ 0$, we have the following relation:
\begin{align}
 \label{als:an:44}
 \textbf{Null}(\mathbf{K}(x))   \subset \textbf{Null}(\mathbf{H}(x)) = \textbf{Null}(\mathbf{H}^{\frac{1}{2}}(x)).
\end{align}
Now, applying Lemma \ref{als:an:lem:6} with $N(x) = \mathbf{K}(x)$ and $M(x) = \mathbf{H}^{\frac{1}{2}}(x)$, we get the following:
\begin{align}
 \label{als:an:45}
   \textbf{Range}(\mathbf{H}^{\frac{1}{2}}(x)) =    \textbf{Range}(\mathbf{H}^{\frac{1}{2}}(x) \mathbf{K}(x) \mathbf{H}^{\frac{1}{2}}(x)) = \textbf{Range}(\mathbf{H}^{\frac{1}{2}} (x) \mathbf{K}(x)  \mathbf{H}^{\frac{1}{2}} (x)).
\end{align}
Then, from the definition of $\xi(x)$, we have
\begin{align}
 \label{als:an:46}
 \xi(x) = & \min_{v \in \textbf{Range}(\mathbf{H}(x))} \frac{\langle \mathbf{H}^{\frac{1}{2}} (x) \mathbf{K}(x)  \mathbf{H}^{\frac{1}{2}} (x) v, v \rangle}{\|v\|^2} \nonumber \\
 & \overset{ \eqref{als:an:45}}{=} \min_{v \in \textbf{Range}(\mathbf{H}^{\frac{1}{2}} (x) \mathbf{K}(x)  \mathbf{H}^{\frac{1}{2}} (x))} \frac{\langle \mathbf{H}^{\frac{1}{2}} (x) \mathbf{K}(x)  \mathbf{H}^{\frac{1}{2}} (x) v, v \rangle}{\|v\|^2} = \lambda_{\min}^+ \left(\mathbf{H}^{\frac{1}{2}} (x) \mathbf{K}(x)  \mathbf{H}^{\frac{1}{2}} (x)\right).
\end{align}
Considering the singular value decomposition provided in the previous Lemma, we get the complete result.
\end{proof}

\begin{lemma}
\label{als:an:lem:12}
Assume, assumption \ref{als:an:a:2} holds. Then for any $x \in \R^n$, the following relation holds:
\begin{align}
\label{als:an:47}
     \| \nabla f(x)\|^2_{\mathbf{K}(x)} \geq  \ \xi(x) \ \| \nabla f(x)\|^2_{\mathbf{H}^{\dagger}(x)} \geq \ \xi \ \| \nabla f(x)\|^2_{\mathbf{H}^{\dagger}(x)}.
\end{align}
\end{lemma}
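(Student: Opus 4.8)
The second inequality is immediate and requires no work: by definition $\xi = \min_{x \in \mathcal{Q}_{\rho}} \xi(x)$, so $\xi(x) \geq \xi$ for every admissible $x$, and since $\|\nabla f(x)\|^2_{\mathbf{H}^{\dagger}(x)} \geq 0$ the scalar factor may simply be lowered from $\xi(x)$ to $\xi$. Hence the entire content of the Lemma is the first inequality, which I would prove by reducing the $\mathbf{K}(x)$-weighted norm of $\nabla f(x)$ to a Rayleigh quotient of the matrix $\mathbf{H}^{\frac{1}{2}}(x)\mathbf{K}(x)\mathbf{H}^{\frac{1}{2}}(x)$ that appears in the definition \eqref{als:an:39} of $\xi(x)$.

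The plan is to set $w := \nabla f(x)$ and, invoking Assumption \ref{als:an:a:2} which guarantees $w \in \textbf{Range}(\mathbf{H}(x))$, to factor $w = \mathbf{H}^{\frac{1}{2}}(x)\, v$ through the explicit choice $v := \mathbf{H}^{\dagger/2}(x)\, w$. First I would verify the two properties that make this substitution legitimate: (i) $\mathbf{H}^{\frac{1}{2}}(x)\, v = \mathbf{H}^{\frac{1}{2}}(x)\mathbf{H}^{\dagger/2}(x)\, w = w$, because $\mathbf{H}^{\frac{1}{2}}(x)\mathbf{H}^{\dagger/2}(x)$ is the orthogonal projector onto $\textbf{Range}(\mathbf{H}(x))$ and $w$ already lies in that subspace; and (ii) $v \in \textbf{Range}(\mathbf{H}(x))$, since $\textbf{Range}(\mathbf{H}^{\dagger/2}(x)) = \textbf{Range}(\mathbf{H}^{\frac{1}{2}}(x)) = \textbf{Range}(\mathbf{H}(x))$. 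Property (ii) is what lets me apply the definition of $\xi(x)$, whose minimum is taken only over $v \in \textbf{Range}(\mathbf{H}(x))$.

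With the factorization in hand I would rewrite both sides as quadratic forms in $v$. For the left-hand side, $\|w\|^2_{\mathbf{K}(x)} = v^{\top}\mathbf{H}^{\frac{1}{2}}(x)\mathbf{K}(x)\mathbf{H}^{\frac{1}{2}}(x)\, v = \langle \mathbf{H}^{\frac{1}{2}}(x)\mathbf{K}(x)\mathbf{H}^{\frac{1}{2}}(x)\, v, v\rangle$, which is exactly the numerator in the Rayleigh quotient of \eqref{als:an:39}. For the right-hand side, $\|w\|^2_{\mathbf{H}^{\dagger}(x)} = w^{\top}\mathbf{H}^{\dagger/2}(x)\mathbf{H}^{\dagger/2}(x)\, w = v^{\top}v = \|v\|^2$, using $\mathbf{H}^{\dagger/2}(x)\mathbf{H}^{\dagger/2}(x) = \mathbf{H}^{\dagger}(x)$. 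Applying the definition of $\xi(x)$ to the admissible vector $v$ then yields $\langle \mathbf{H}^{\frac{1}{2}}(x)\mathbf{K}(x)\mathbf{H}^{\frac{1}{2}}(x)\, v, v\rangle \geq \xi(x)\,\|v\|^2$, which is precisely $\|w\|^2_{\mathbf{K}(x)} \geq \xi(x)\,\|w\|^2_{\mathbf{H}^{\dagger}(x)}$, the claimed bound.

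The only genuine subtlety — and the step I would write out most carefully — is the range and projection bookkeeping in (i) and (ii): one must confirm that $w$ lands in $\textbf{Range}(\mathbf{H}(x))$ so that the projector $\mathbf{H}^{\frac{1}{2}}(x)\mathbf{H}^{\dagger/2}(x)$ acts as the identity on it, and that the constructed $v$ is itself an admissible test vector for the constrained minimum defining $\xi(x)$. Both facts rest on Assumption \ref{als:an:a:2} together with the elementary pseudo-inverse identities for symmetric positive semidefinite matrices, namely $\textbf{Range}(\mathbf{H}^{\dagger/2}(x)) = \textbf{Range}(\mathbf{H}(x))$ and $\mathbf{H}^{\dagger/2}(x)\mathbf{H}^{\dagger/2}(x) = \mathbf{H}^{\dagger}(x)$; once these are in place, everything else is a direct substitution.
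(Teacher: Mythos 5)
Your proposal is correct and follows essentially the same route as the paper: the paper also uses Assumption \ref{als:an:a:2} to write $\nabla f(x) = \mathbf{H}^{\frac{1}{2}}(x)\mathbf{H}^{\dagger/2}(x)\nabla f(x)$, reduces $\|\nabla f(x)\|^2_{\mathbf{K}(x)}$ to the quadratic form of $\mathbf{H}^{\frac{1}{2}}(x)\mathbf{K}(x)\mathbf{H}^{\frac{1}{2}}(x)$ at the test vector $\mathbf{H}^{\dagger/2}(x)\nabla f(x) \in \textbf{Range}(\mathbf{H}(x))$, and invokes the Rayleigh-quotient characterization of $\xi(x)$ (via \eqref{als:an:43}) before passing to $\xi$. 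Your only cosmetic deviation is applying the definition \eqref{als:an:39} of $\xi(x)$ directly rather than citing the $\lambda_{\min}^+$ identity of Lemma \ref{als:an:lem:11}, and your explicit range-and-projector bookkeeping is exactly the step the paper relies on implicitly.
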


\begin{proof}
Since, $\textbf{Range}(\mathbf{H}(x)) = \textbf{Range}(\mathbf{H}^{\frac{1}{2}}(x))$ using the relation $\nabla f(x) \in \textbf{Range}(\mathbf{H}(x))$ (assumption \ref{als:an:a:2}), we have the following:
\begin{align}
\label{als:an:48}
 \mathbf{H}^{\frac{1}{2}}(x)   \mathbf{H}^{\dagger/2}(x)   \nabla f(x) = \nabla f(x),
\end{align}
here, we used the notation $\mathbf{H}^{\dagger/2}(x)$ to denote the quantity $\left(\mathbf{H}^{\dagger}(x)\right)^{\frac{1}{2}}$. Then using the above relation, we have
\begin{align*}
 \nabla f(x)^{\top} \mathbf{K}(x) \nabla f(x) & \overset{ \eqref{als:an:48}}{=}  \nabla f(x)^{\top}  \mathbf{H}^{\dagger/2}(x)  \mathbf{H}^{\frac{1}{2}}(x)  \mathbf{K}(x)   \mathbf{H}^{\frac{1}{2}}(x)  \mathbf{H}^{\dagger/2}(x)  \nabla f(x) \\
 & = \nabla f(x)^{\top}  \mathbf{H}^{\dagger/2}(x)   \mathbf{H}^{\frac{1}{2}} (x) \mathbf{K}(x)  \mathbf{H}^{\frac{1}{2}} (x) \mathbf{H}^{\dagger/2}(x)  \nabla f(x) \\
 & \overset{ \eqref{als:an:43}}{\geq}   \xi(x) \ \| \nabla f(x)\|^2_{\mathbf{H}^{\dagger}(x)} \overset{ \eqref{als:an:39}}{\geq} \xi \ \| \nabla f(x)\|^2_{\mathbf{H}^{\dagger}(x)},
\end{align*}
here, we used the definition of $\xi(x), \ \xi$ along with the relation $\mathbf{H}^{\dagger/2}(x)   \nabla f(x) \in \textbf{Range}(\mathbf{H}(x))$. 
\end{proof}

\begin{assumption}
\label{als:an:a:3}
Let, $x_k$ be the random iterate generated by the ANM algorithm and for all $k \geq 1$, assume one of the following holds:
\begin{align}
\label{als:an:49}
    \text{Either} \quad \mathbf{H}(x_k) \succ 0 \quad \text{or} \quad G (x_k-x_{k-1}) \in \textbf{Range}(\mathbf{H}(x_k)).
\end{align}
\end{assumption}

\begin{lemma}
\label{als:an:lem:13}
Assume assumption \ref{als:an:a:3} holds. Then we have the following:
\begin{align}
\label{als:an:50}
    \|x_k-x_{k-1}\|^2_{\mathbf{L}(x_k)} \geq \lambda_{\min}^+ \left(G^{-\frac{1}{2}} \mathbf{L}(x_k) G^{-\frac{1}{2}}\right)  \|x_k-x_{k-1}\|^2_G =  \xi(x_k) \ \|x_k-x_{k-1}\|^2_G.
\end{align}
\end{lemma}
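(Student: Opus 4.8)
The plan is to reduce the claimed inequality to a Rayleigh-quotient bound for the symmetric positive semidefinite matrix $M \eqdef G^{-\frac{1}{2}} \mathbf{L}(x_k) G^{-\frac{1}{2}}$, and then to use Assumption \ref{als:an:a:3} to certify that the relevant vector lies in $\textbf{Range}(M)$, so that the minimum nonzero eigenvalue can legitimately be factored out of the quadratic form. Writing $w = x_k - x_{k-1}$ and introducing the change of variables $v = G^{\frac{1}{2}} w$, I would first note that $\|w\|^2_G = v^\top v = \|v\|^2$ and $\|w\|^2_{\mathbf{L}(x_k)} = w^\top \mathbf{L}(x_k) w = v^\top M v$. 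Under this substitution, the desired estimate \eqref{als:an:50} is exactly the statement
\[
v^\top M v \geq \lambda_{\min}^+(M)\, \|v\|^2,
\]
while the claimed equality with $\xi(x_k)$ follows immediately from Lemmas \ref{als:an:lem:10} and \ref{als:an:lem:11}, which together identify $\lambda_{\min}^+(M)$ with $\lambda_{\min}^+\!\left(\mathbf{H}^{\frac{1}{2}}(x_k)\mathbf{K}(x_k)\mathbf{H}^{\frac{1}{2}}(x_k)\right) = \xi(x_k)$.

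Next I would recall that for any symmetric positive semidefinite matrix the bound $v^\top M v \geq \lambda_{\min}^+(M)\|v\|^2$ holds precisely for vectors $v \in \textbf{Range}(M)$: a component of $v$ in $\textbf{Null}(M)$ inflates $\|v\|^2$ without contributing to $v^\top M v$, so the restriction to the range is essential. Hence the heart of the matter is to show $v = G^{\frac{1}{2}} w \in \textbf{Range}(M)$. Using the singular value decomposition $Y = \mathbf{H}^{\frac{1}{2}}(x_k) G^{-\frac{1}{2}} = U\Sigma V^\top$ from the proof of Lemma \ref{als:an:lem:10}, I have from \eqref{als:an:41} that $M = Y^\top \left(\frac{1}{\rho} I + Y Y^\top\right)^{-1} Y$. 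Since $\left(\frac{1}{\rho} I + Y Y^\top\right)^{-1} \succ 0$, its null space is trivial, and an application of Lemma \ref{als:an:lem:6} (taking the positive semidefinite matrix there to be $\left(\frac{1}{\rho} I + Y Y^\top\right)^{-1}$ and the other matrix to be $Y$) yields $\textbf{Range}(M) = \textbf{Range}(Y^\top) = \textbf{Range}\!\left(G^{-\frac{1}{2}} \mathbf{H}^{\frac{1}{2}}(x_k)\right)$.

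It then remains to translate the containment $v \in \textbf{Range}\!\left(G^{-\frac{1}{2}}\mathbf{H}^{\frac{1}{2}}(x_k)\right)$ back into a condition on $w$. Since $v = G^{\frac{1}{2}} w$, this containment is equivalent to $G w \in \textbf{Range}\!\left(\mathbf{H}^{\frac{1}{2}}(x_k)\right) = \textbf{Range}(\mathbf{H}(x_k))$, i.e. to $G(x_k - x_{k-1}) \in \textbf{Range}(\mathbf{H}(x_k))$, which is exactly the second alternative of Assumption \ref{als:an:a:3}. In the first alternative, $\mathbf{H}(x_k) \succ 0$, the range is all of $\R^n$ so the containment is automatic, and moreover $M \succ 0$, making the Rayleigh bound valid for every $v$. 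I expect the only delicate point to be precisely this range verification: the entire purpose of Assumption \ref{als:an:a:3} is to guarantee that $G^{\frac{1}{2}} w$ carries no component in $\textbf{Null}(M)$, without which the minimum nonzero eigenvalue could not be pulled out of the quadratic form and the inequality could fail.
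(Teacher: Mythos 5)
Your proposal is correct and takes essentially the same route as the paper's own proof: the same change of variables $v = G^{\frac{1}{2}}(x_k-x_{k-1})$, the same case split on Assumption \ref{als:an:a:3}, the same application of Lemma \ref{als:an:lem:6} with $N = \left(\frac{1}{\rho}I + YY^{\top}\right)^{-1}$ and $Y = \mathbf{H}^{\frac{1}{2}}(x_k)G^{-\frac{1}{2}}$ to identify $\textbf{Range}\left(G^{-\frac{1}{2}}\mathbf{L}(x_k)G^{-\frac{1}{2}}\right)$ with $\textbf{Range}\left(G^{-\frac{1}{2}}\mathbf{H}^{\frac{1}{2}}(x_k)\right)$, and the same Courant--Fischer bound with $\lambda_{\min}^{+}$ identified as $\xi(x_k)$ via Lemmas \ref{als:an:lem:10} and \ref{als:an:lem:11}. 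The only nit is your side remark that the Rayleigh bound holds ``precisely'' for $v \in \textbf{Range}(M)$ (it can also hold accidentally for vectors with a null-space component), but this plays no role in the argument, which uses only the sufficient direction.
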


\begin{proof}
First, assume $\mathbf{H}(x_k) \succ 0$. This implies $G^{-\frac{1}{2}} \mathbf{L}(x_k) G^{-\frac{1}{2}} \succ 0$. Since, $G^{-\frac{1}{2}} \mathbf{L}(x_k) G^{-\frac{1}{2}} \succ 0$ we have $\lambda_{\min} \left(G^{-\frac{1}{2}} \mathbf{L}(x_k) G^{-\frac{1}{2}}\right) = \lambda_{\min}^+ \left(G^{-\frac{1}{2}} \mathbf{L}(x_k) G^{-\frac{1}{2}}\right) = \xi(x_k) $. Then we have 
\begin{align}
\label{als:an:51}
    \|x_k-x_{k-1}\|^2_{\mathbf{L}(x_k)} & = (x_k-x_{k-1})^{\top}   G^{\frac{1}{2}} G^{-\frac{1}{2}} \mathbf{L}(x_k) G^{-\frac{1}{2}} G^{\frac{1}{2}} (x_k-x_{k-1}) \nonumber \\
    & \geq \lambda_{\min} \left(G^{-\frac{1}{2}} \mathbf{L}(x_k) G^{-\frac{1}{2}}\right)  \|x_k-x_{k-1}\|^2_G =  \xi(x_k) \|x_k-x_{k-1}\|^2_G.
\end{align}
Considering the second condition of assumption \ref{als:an:a:3}, we have
\begin{align}
\label{als:an:52}
 G (x_k-x_{k-1}) \in \textbf{Range}(\mathbf{H}(x_k)) = \textbf{Range}(\mathbf{H}^{\frac{1}{2}}(x_k)).
\end{align}
As $\textbf{Null} \left(\left(\frac{1}{\rho}+ \mathbf{H}^{\frac{1}{2}} (x_k)  G^{-1}  \mathbf{H}^{\frac{1}{2}} (x_k) \right)^{-1} \right) \subset\textbf{Null}(G^{-\frac{1}{2}}\mathbf{H}^{\frac{1}{2}}(x_k))$, applying Lemma \ref{als:an:lem:6} with $N(x_k) = \left(\frac{1}{\rho}+ \mathbf{H}^{\frac{1}{2}} (x_k)  G^{-1}  \mathbf{H}^{\frac{1}{2}} (x_k) \right)^{-1}$ and $M(x_k) = \mathbf{H}^{\frac{1}{2}}(x_k) G^{-\frac{1}{2}}$, we get the following:
\begin{align}
\label{als:an:53}
   \textbf{Range}(G^{-\frac{1}{2}}\mathbf{H}^{\frac{1}{2}}(x_k)) =    \textbf{Range}(G^{-\frac{1}{2}}\mathbf{H}^{\frac{1}{2}}(x_k) N(x_k) \mathbf{H}^{\frac{1}{2}}(x_k)G^{-\frac{1}{2}}) = \textbf{Range}(G^{-\frac{1}{2}} \mathbf{L}(x_k) G^{-\frac{1}{2}}).
\end{align}
This implies
\begin{align}
\label{als:an:54}
    G^{\frac{1}{2}} (x_k-x_{k-1}) \overset{ \eqref{als:an:52}}{\in}    \textbf{Range}(G^{-\frac{1}{2}}\mathbf{H}^{\frac{1}{2}}(x_k)) \overset{ \eqref{als:an:53}}{=} \textbf{Range}(G^{-\frac{1}{2}} \mathbf{L}(x_k) G^{-\frac{1}{2}}).
\end{align}
Using the Courant-Fisher Theorem along with the above relation, we get the following identity:
\begin{align}
\label{als:an:55}
    \|x_k-x_{k-1}\|^2_{\mathbf{L}(x_k)} & = (x_k-x_{k-1})^{\top}   G^{\frac{1}{2}} G^{-\frac{1}{2}} \mathbf{L}(x_k) G^{-\frac{1}{2}} G^{\frac{1}{2}} (x_k-x_{k-1}) \nonumber \\
    & \overset{ \eqref{als:an:54}}{\geq} \lambda_{\min}^+ \left(G^{-\frac{1}{2}} \mathbf{L}(x_k) G^{-\frac{1}{2}}\right)  \|x_k-x_{k-1}\|^2_G \nonumber \\
    & \overset{ \text{Lemma} \ \ref{als:an:lem:10} +  \eqref{als:an:39}}{=}  \xi(x_k) \ \|x_k-x_{k-1}\|^2_G.
\end{align}
Combining \eqref{als:an:51} and \eqref{als:an:55}, we get the required result. 
\end{proof}

\begin{theorem}
\label{als:an:thm:1}
Assume, $x_k$ are random iterates of the PNM algorithm. Then, the sequences $x_k$ converges and the following identity holds: 
\begin{align}
\label{als:an:56}
   & f(x_{k+1}) - f^*   \leq (1-\eta)^{k+1} \ [f(x_{0}) - f^*],
\end{align}
here, $\beta = \min_{x \in \mathcal{Q}_{\rho}} \lambda_{\min} \left(\mathbf{K}^{\frac{1}{2}}(x) G \mathbf{K}^{\frac{1}{2}}(x)\right)$ and $\eta = \frac{\mu \xi (\beta + \rho)}{\rho L}  $.
\end{theorem}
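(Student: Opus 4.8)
The plan is to prove the one-step contraction
\[
f(x_{k+1}) - f^* \le (1-\eta)\,\bigl(f(x_k)-f^*\bigr)
\]
and then unroll the recursion to obtain \eqref{als:an:56}. First I would invoke the relative smoothness bound \eqref{als:an:7} with $y=x_k$ and $x=x_{k+1}$, giving $f(x_{k+1}) \le f(x_k) + \langle \nabla f(x_k),\, x_{k+1}-x_k\rangle + \tfrac{L}{2}\|x_{k+1}-x_k\|^2_{\mathbf{H}(x_k)}$. Substituting the PNM step $x_{k+1}-x_k = -\tfrac{1}{L}\,\mathbf{K}(x_k)\nabla f(x_k)$ from \eqref{als:an:19}, and using that $\mathbf{K}(x_k)$ is symmetric, turns the linear term into $-\tfrac{1}{L}\|\nabla f(x_k)\|^2_{\mathbf{K}(x_k)}$ and the quadratic term into $\tfrac{1}{2L}\,\langle \nabla f(x_k),\, \mathbf{K}(x_k)\mathbf{H}(x_k)\mathbf{K}(x_k)\nabla f(x_k)\rangle$.

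The key algebraic simplification is to rewrite the middle factor $\mathbf{K}\mathbf{H}\mathbf{K}$. Applying the identity $\mathbf{H}(x)\mathbf{K}(x) = I - \tfrac{1}{\rho} G\mathbf{K}(x)$ from Lemma \ref{als:an:lem:5} gives $\mathbf{K}\mathbf{H}\mathbf{K} = \mathbf{K} - \tfrac{1}{\rho}\,\mathbf{K}G\mathbf{K}$, so the quadratic term becomes $\tfrac{1}{2L}\bigl(\|\nabla f(x_k)\|^2_{\mathbf{K}(x_k)} - \tfrac{1}{\rho}\,\langle \nabla f(x_k),\, \mathbf{K}(x_k)G\mathbf{K}(x_k)\nabla f(x_k)\rangle\bigr)$. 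Writing $\mathbf{K}G\mathbf{K} = \mathbf{K}^{1/2}\bigl(\mathbf{K}^{1/2}G\mathbf{K}^{1/2}\bigr)\mathbf{K}^{1/2}$ and bounding the inner symmetric factor below by $\beta = \min_{x\in\mathcal{Q}_\rho}\lambda_{\min}\bigl(\mathbf{K}^{1/2}(x)G\mathbf{K}^{1/2}(x)\bigr)$ yields $\langle \nabla f(x_k),\, \mathbf{K}G\mathbf{K}\nabla f(x_k)\rangle \ge \beta\,\|\nabla f(x_k)\|^2_{\mathbf{K}(x_k)}$. Collecting terms, the two contributions combine to the clean descent inequality $f(x_{k+1})-f(x_k) \le -\tfrac{1}{2L}\bigl(1+\tfrac{\beta}{\rho}\bigr)\|\nabla f(x_k)\|^2_{\mathbf{K}(x_k)}$.

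To convert the gradient norm in the $\mathbf{K}$-norm into a function-value gap, I would chain Lemma \ref{als:an:lem:12}, namely $\|\nabla f(x_k)\|^2_{\mathbf{K}(x_k)} \ge \xi\,\|\nabla f(x_k)\|^2_{\mathbf{H}^\dagger(x_k)}$, with Lemma \ref{als:an:lem:2} rearranged as $\|\nabla f(x_k)\|^2_{\mathbf{H}^\dagger(x_k)} \ge 2\mu\,(f(x_k)-f^*)$, giving $\|\nabla f(x_k)\|^2_{\mathbf{K}(x_k)} \ge 2\mu\xi\,(f(x_k)-f^*)$. Plugging this into the descent inequality produces exactly $f(x_{k+1})-f^* \le (1-\eta)(f(x_k)-f^*)$ with $\eta = \tfrac{\mu\xi(\beta+\rho)}{\rho L}$, and telescoping from $k=0$ gives \eqref{als:an:56}.

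The step I expect to require the most care is justifying that every iterate lies in the level set $\mathcal{Q}_\rho$, since Assumption \ref{als:an:a:1} (and hence \eqref{als:an:7}) is only postulated there; this creates a mild circularity, because the descent bound that keeps the iterates in $\mathcal{Q}_\rho$ is itself derived from \eqref{als:an:7}. I would resolve it by induction: assuming $x_0,\dots,x_k$ lie in $\mathcal{Q}_\rho$, the one-step inequality above shows $f(x_{k+1}) \le f(x_k) \le \cdots \le f(x_0)$, so the whole trajectory stays in the sublevel set $\{x : f(x) \le f(x_0)\}$, which (taking $y=x$ in \eqref{als:an:6}) is contained in $\mathcal{Q}_\rho$; this closes the induction and licenses the use of \eqref{als:an:7} at each step. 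Finally, convergence of the sequence $x_k$ itself follows from $f(x_k)\to f^*$ together with the relative convexity lower bound \eqref{als:an:8}, which forces the iterates toward the minimizer.
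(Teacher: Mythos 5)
Your proposal is correct and follows essentially the same route as the paper's proof: the relative-smoothness bound \eqref{als:an:7} with the PNM step substituted, the identity $\mathbf{H}\mathbf{K} = I - \tfrac{1}{\rho}G\mathbf{K}$ from Lemma \ref{als:an:lem:5} to reduce $\mathbf{K}\mathbf{H}\mathbf{K}$, the $\beta$-eigenvalue bound on $\mathbf{K}G\mathbf{K}$, and the chaining of Lemmas \ref{als:an:lem:12} and \ref{als:an:lem:2} to reach the contraction factor $1-\eta$. Your additional induction argument keeping the iterates inside $\mathcal{Q}_{\rho}$ is a point the paper leaves implicit, and it is a legitimate tightening rather than a departure.
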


\begin{proof}
Using the update formula of the PNM method provided in \eqref{als:an:19} along with assumption \ref{als:an:a:1} we have,
\allowdisplaybreaks{\begin{align}
\label{als:an:57}
     f(x_{k+1}) - f^*  & \overset{ \eqref{als:an:7}}{\leq} f(x_k) -f^* + \langle \nabla f(x_k), x_{k+1}-x_k\rangle   + \frac{L}{2} \|x_{k+1}-x_k\|^2_{ \mathbf{H}(x_k)} \nonumber \\
    & \overset{ \eqref{als:an:19}}{=} f(x_k) -f^* - \frac{1}{L} \| \nabla f(x_k)\|^2_{\mathbf{K}(x_k)} + \frac{L}{2} \big \| \frac{1}{L} \mathbf{K}(x_k)  \nabla f(x_k)  \big \|^2_{ \mathbf{H}(x_k)} \nonumber \\
    & = f(x_k) -f^* - \frac{1}{L} \| \nabla f(x_k)\|^2_{\mathbf{K}(x_k)} + \frac{1}{2L} \nabla f(x_k)^{\top} \mathbf{K}(x_k)  \mathbf{H}(x_k) \mathbf{K}(x_k)  \nabla f(x_k) \nonumber \\
    & \overset{ \eqref{als:an:15}}{=} f(x_k) -f^* - \frac{1}{L} \| \nabla f(x_k)\|^2_{\mathbf{K}(x_k)} +  \frac{1}{2L} \|  \nabla f(x_k)\|^2_{ \mathbf{K}(x_k) } \nonumber \\
    & \qquad \qquad \qquad - \frac{1}{2 \rho L} \| \nabla f(x_k)\|^2_{ \mathbf{K}(x_k) G \mathbf{K}(x_k)  } \nonumber \\
    & = f(x_k) -f^* - \frac{1}{2L} \| \nabla f(x_k)\|^2_{\mathbf{K}(x_k)}  - \frac{1}{2 \rho L} \| \nabla f(x_k)\|^2_{ \mathbf{K}(x_k) G \mathbf{K}(x_k)} \nonumber \\
     & \leq f(x_k) -f^* - \frac{1}{2L} \| \nabla f(x_k)\|^2_{\mathbf{K}(x_k)} - \frac{\beta}{2 \rho L} \| \nabla f(x_k)\|^2_{ \mathbf{K}(x_k) } \nonumber \\
     & = f(x_k) -f^* - \frac{\beta + \rho}{2\rho L}  \| \nabla f(x_k)\|^2_{\mathbf{K}(x_k)} \nonumber \\
    & \overset{ \eqref{als:an:47}}{\leq} f(x_k) -f^* - \frac{\xi (\beta + \rho)}{2\rho L}    \| \nabla f(x_k)\|^2_{\mathbf{H}^{\dagger}(x_k)} \nonumber \\
     & \overset{ \eqref{als:an:10}}{\leq}  f(x_k) -f^* -   \frac{\mu \xi (\beta + \rho)}{\rho L}    [f(x_k) -f^* ] \nonumber \\
     & = (1-\eta)  [f(x_k) -f^* ] \nonumber \\
     & \leq (1-\eta)^{k+1} \ [f(x_{0}) - f^*],
\end{align}}
here, $\beta = \min_{x \in \mathcal{Q}_{\rho}} \lambda_{\min} \left(\mathbf{K}^{\frac{1}{2}}(x) G \mathbf{K}^{\frac{1}{2}}(x)\right)$ and $\eta = \frac{\mu \xi (\beta + \rho)}{\rho L}  $. Unrolling the recurrence, we get the required result.
\end{proof}

\begin{theorem}
\label{als:an:thm:2}
Assume, $x_k$ is the random iterates of the ANM algorithm. Then, the sequence $x_k$ converges and the following identity holds: 
\begin{align}
\label{als:an:58}
   & \mathcal{V}_{k+1}(\rho)  \leq \left(1- \xi \frac{ \mu }{L}\right)^{k} \ \mathcal{V}_1(\rho).
\end{align}
\end{theorem}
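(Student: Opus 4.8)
The plan is to establish that the augmented sequence $\mathcal{V}_k(\rho)$ is a Lyapunov function contracting at the claimed rate, mirroring the descent argument of Theorem \ref{als:an:thm:1} but now tracking both the function gap and the momentum term $\frac{L}{2\rho}\|x_k-x_{k-1}\|^2_G$. First I would use the simplified ANM update from \eqref{als:an:25}, namely $x_{k+1}-x_k = -\frac{1}{\rho}\mathbf{K}(x_k)u(x_k,z_k)$ with $u(x_k,z_k)=\frac{\rho}{L}\nabla f(x_k)+Gz_k$ and $z_k = x_{k-1}-x_k$, so that $x_{k+1}-x_k = -\frac{1}{\rho}\mathbf{K}(x_k)\left[\frac{\rho}{L}\nabla f(x_k)-G(x_k-x_{k-1})\right]$. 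Substituting this into the relative smoothness bound \eqref{als:an:7} with $y=x_k$ gives
\begin{align*}
f(x_{k+1})-f^* \leq f(x_k)-f^* + \langle \nabla f(x_k), x_{k+1}-x_k\rangle + \frac{L}{2}\|x_{k+1}-x_k\|^2_{\mathbf{H}(x_k)}.
\end{align*}
The goal of this first stage is to expand the inner product and the quadratic term, using the Lemma \ref{als:an:lem:5} identities $\mathbf{H}\mathbf{K}=I-\frac{1}{\rho}G\mathbf{K}$ and $G\mathbf{K}G=\rho G-\rho\mathbf{L}$, and collect the result into three groups: a negative multiple of $\|\nabla f(x_k)\|^2_{\mathbf{K}(x_k)}$, a negative multiple of $\|x_k-x_{k-1}\|^2_{\mathbf{L}(x_k)}$, and cross terms coming from the $\nabla f$--$G(x_k-x_{k-1})$ interaction.

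Second, I would add the momentum term $\frac{L}{2\rho}\|x_{k+1}-x_k\|^2_G$ to both sides to form $\mathcal{V}_{k+1}(\rho)$, expanding $\|x_{k+1}-x_k\|^2_G = \frac{1}{\rho^2}\|u(x_k,z_k)\|^2_{\mathbf{K}(x_k)G\mathbf{K}(x_k)}$ and again invoking \eqref{als:an:16} to rewrite $\mathbf{K}G\mathbf{K}$ in terms of $\mathbf{K}$ and $\mathbf{L}$. The key algebraic hope is that the cross terms introduced by the momentum contribution cancel against the cross terms from the descent step, so that what remains is a clean bound
\begin{align*}
\mathcal{V}_{k+1}(\rho) \leq f(x_k)-f^* - \frac{\xi}{2L}\|\nabla f(x_k)\|^2_{\mathbf{H}^\dagger(x_k)} + \frac{L}{2\rho}\left[\|x_k-x_{k-1}\|^2_G - \xi\|x_k-x_{k-1}\|^2_G\right],
\end{align*}
or something structurally of this form. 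Here I would apply Lemma \ref{als:an:lem:12} to lower bound the gradient term by $\xi\|\nabla f(x_k)\|^2_{\mathbf{H}^\dagger(x_k)}$ and Lemma \ref{als:an:lem:13} (justified by Assumption \ref{als:an:a:3}) to convert $\|x_k-x_{k-1}\|^2_{\mathbf{L}(x_k)}$ into $\xi\|x_k-x_{k-1}\|^2_G$. Finally I would apply Lemma \ref{als:an:lem:2} to replace $\|\nabla f(x_k)\|^2_{\mathbf{H}^\dagger(x_k)}$ by $2\mu[f(x_k)-f^*]$, producing a factor $(1-\xi\mu/L)$ on the function gap and a matching factor on the momentum term, so that $\mathcal{V}_{k+1}(\rho)\leq(1-\xi\mu/L)\mathcal{V}_k(\rho)$, and unrolling yields \eqref{als:an:58}.

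The main obstacle will be the second stage: verifying that the cross terms genuinely cancel so that both the function-gap piece and the momentum piece contract by the \emph{same} factor $(1-\xi\mu/L)$. This is the delicate point, because the descent inequality and the definition of $\mathcal{V}$ must be balanced precisely — the coefficients on $\|\nabla f(x_k)\|^2_{\mathbf{K}(x_k)}$ and on $\|x_k-x_{k-1}\|^2_{\mathbf{L}(x_k)}$ have to come out so that, after applying the two eigenvalue lower bounds and Lemma \ref{als:an:lem:2}, a single common contraction rate emerges rather than two different rates. I expect the correct handling of the mixed $\langle \nabla f(x_k), G(x_k-x_{k-1})\rangle$--type term, weighted through $\mathbf{K}(x_k)$ and $\mathbf{L}(x_k)$, to require careful use of both identities in Lemma \ref{als:an:lem:5} simultaneously; any sign or coefficient slip there would destroy the cancellation. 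A secondary subtlety is ensuring the iterates remain in $\mathcal{Q}_\rho$ so that Assumption \ref{als:an:a:1} and the definition of $\xi$ apply at every step, which should follow inductively from the monotonicity $\mathcal{V}_{k+1}(\rho)\leq\mathcal{V}_k(\rho)$ together with the definition \eqref{als:an:6} of the level set.
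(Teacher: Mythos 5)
Your proposal matches the paper's own proof essentially step for step: the same substitution of the momentum form of the ANM update \eqref{als:an:26} into the relative-smoothness bound \eqref{als:an:7}, the same addition of $\frac{L}{2\rho}\|x_{k+1}-x_k\|^2_G$ with the cross terms cancelling exactly, the same use of the identities \eqref{als:an:15}--\eqref{als:an:16} followed by Lemmas \ref{als:an:lem:12}, \ref{als:an:lem:13}, and \ref{als:an:lem:2}, and the same unrolling. The ``delicate point'' you flag resolves benignly rather than by exact cancellation: the momentum piece contracts at rate $\xi$ while the function gap contracts at rate $\xi\mu/L$, and since $\mu \leq L$ the paper simply takes the common factor $\theta = \xi \min\{1, \mu/L\} = \xi\mu/L$.
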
 

\begin{proof}
Using the update formula of the ANM method, we have 

From the first identity of assumption \ref{als:an:a:1} we have,
\begin{align}
\label{als:an:59}
    f(x_{k+1}) & \overset{ \eqref{als:an:7}}{\leq} f(x_k) + \langle \nabla f(x_k), x_{k+1}-x_k\rangle   + \frac{L}{2} \|x_{k+1}-x_k\|^2_{ \mathbf{H}(x_k)}.
\end{align}
Now, we use the update formula of the ANM algorithm provided in \eqref{als:an:26} to simplify the right hand side of \eqref{als:an:59}.Using \eqref{als:an:26}, the second term of \eqref{als:an:59} can be simplified as follows:
\begin{align}
    \label{als:an:60}
  \langle \nabla f(x_k),  x_{k+1}-x_k\rangle  & \overset{ \eqref{als:an:26}}{=} - \frac{1}{\rho} \nabla f(x_k)^{\top}  \mathbf{K}(x_k) \left[\frac{\rho}{L} \nabla f(x_k) - G (x_k-x_{k-1})\right] \nonumber \\ 
  & = - \frac{1}{L} \| \nabla f(x_k)\|^2_{\mathbf{K}(x_k)} + \frac{1}{\rho} (x_k-x_{k-1})^{\top} G \mathbf{K}(x_k)  \nabla f(x_k).
\end{align}
The third term of \eqref{als:an:59} can be simplified as follows:
\allowdisplaybreaks{\begin{align}
\label{als:an:61}
    \frac{L}{2} \|x_{k+1}-x_k\|^2_{ \mathbf{H}(x_k)} & \overset{ \eqref{als:an:26}}{=}  \frac{L}{2} \big \| \frac{1}{L} \mathbf{K}(x_k)  \nabla f(x_k) - \frac{1}{\rho}  \mathbf{K}(x_k)  G (x_k-x_{k-1}) \big \|^2_{ \mathbf{H}(x_k)} \nonumber \\
    & = \frac{1}{2L} \|\mathbf{K}(x_k)  \nabla f(x_k)\|^2_{   \mathbf{H}(x_k)  }  + \frac{L}{2\rho^2} \|\mathbf{K}(x_k) G (x_k-x_{k-1})\|^2_{  \mathbf{H}(x_k)   } \nonumber \\
    & \qquad \qquad - \frac{1}{\rho} \langle G (x_k-x_{k-1}), \nabla f(x_k) \rangle_{\mathbf{K}(x_k)   \mathbf{H}(x_k)  \mathbf{K}(x_k)}   \nonumber \\
    &  \overset{ \eqref{als:an:15}}{=}  \frac{1}{2L} \|  \nabla f(x_k)\|^2_{ \mathbf{K}(x_k) } - \frac{1}{2 \rho L} \| \nabla f(x_k)\|^2_{ \mathbf{K}(x_k) G \mathbf{K}(x_k)} \nonumber \\
    & + \frac{L}{2 \rho^2} \|G(x_k-x_{k-1})\|^2_{\mathbf{K}(x_k)}     - \frac{L}{2 \rho^3} \|G(x_k-x_{k-1})\|^2_{\mathbf{K}(x_k) G \mathbf{K}(x_k)} \nonumber \\
    & - \frac{1}{\rho} (x_k-x_{k-1})^{\top} G \mathbf{K}(x_k)  \nabla f(x_k) \nonumber \\
    & + \frac{1}{\rho^2} (x_k-x_{k-1})^{\top} G  \mathbf{K}(x_k) G \mathbf{K}(x_k) \nabla f(x_k).
\end{align}}
Similarly, we have
\begin{align}
\label{als:an:62}
    \frac{L}{2 \rho}  \|x_{k+1}-x_k\|^2_G  & \overset{ \eqref{als:an:26}}{=} \frac{L}{2 \rho} \big \| \frac{1}{L} \mathbf{K}(x_k)   \nabla f(x_k) - \frac{1}{\rho} \mathbf{K}(x_k) G (x_k-x_{k-1})\big \|^2_G \nonumber \\
    & = \frac{1}{2 \rho L} \|  \nabla f(x_k)\|^2_{ \mathbf{K}(x_k) G \mathbf{K}(x_k)}  + \frac{L}{2 \rho^3}  \|G(x_k-x_{k-1})\|^2_{\mathbf{K}(x_k) G \mathbf{K}(x_k)   } \nonumber \\
    & \qquad \qquad - \frac{1}{\rho^2} (x_k-x_{k-1})^{\top} G   \mathbf{K}(x_k) G \mathbf{K}(x_k)  \nabla f(x_k).
\end{align}
Substituting \eqref{als:an:60} and \eqref{als:an:61} back in \eqref{als:an:59} and using \eqref{als:an:62} we have
\allowdisplaybreaks{\begin{align}
\label{als:an:63}
  \mathcal{V}_{k+1}(\rho) &   = f(x_{k+1}) -f^* + \frac{L}{2 \rho}  \|x_{k+1}-x_k\|^2_G \nonumber \\
   & \leq f(x_k) -f^*  + \langle \nabla f(x_k), x_{k+1}-x_k\rangle   + \frac{L}{2} \|x_{k+1}-x_k\|^2_{ \mathbf{H}(x_k)} + \frac{L}{2 \rho} \|x_{k+1}-x_k\|^2_G \nonumber \\
   & \overset{ \eqref{als:an:60} + \eqref{als:an:61} + \eqref{als:an:62}}{=} f(x_k) -f^*  - \frac{1}{L} \| \nabla f(x_k)\|^2_{\mathbf{K}(x_k)} + \cancel{ \frac{1}{\rho} (x_k-x_{k-1})^{\top} G \mathbf{K}(x_k)  \nabla f(x_k)} \nonumber \\
  &  +  \frac{1}{2L} \|  \nabla f(x_k)\|^2_{ \mathbf{K}(x_k) } - \cancel{\frac{1}{2 \rho L} \| \nabla f(x_k)\|^2_{ \mathbf{K}(x_k) G \mathbf{K}(x_k)}} \nonumber \\
  & - \cancel{\frac{L}{2 \rho^3} \|G(x_k-x_{k-1})\|^2_{\mathbf{K}(x_k) G \mathbf{K}(x_k)}}  + \frac{L}{2 \rho^2} \|G(x_k-x_{k-1})\|^2_{\mathbf{K}(x_k)} \nonumber \\
    &    -  \cancel{\frac{1}{\rho} (x_k-x_{k-1})^{\top} G \mathbf{K}(x_k)  \nabla f(x_k)} + \cancel{\frac{1}{\rho^2} (x_k-x_{k-1})^{\top} G  \mathbf{K}(x_k) G \mathbf{K}(x_k) \nabla f(x_k)} \nonumber \\ 
  & + \cancel{\frac{1}{2 \rho L} \|  \nabla f(x_k)\|^2_{ \mathbf{K}(x_k) G \mathbf{K}(x_k)}}  + \cancel{\frac{L}{2 \rho^3}  \|G(x_k-x_{k-1})\|^2_{\mathbf{K}(x_k) G \mathbf{K}(x_k)   } } \nonumber \\
  & - \cancel{\frac{1}{\rho^2} (x_k-x_{k-1})^{\top} G   \mathbf{K}(x_k) G \mathbf{K}(x_k)  \nabla f(x_k)}  \nonumber \\
   & \leq f(x_k) -f^* - \frac{1}{2L} \|\nabla f(x_k)\|^2_{ \mathbf{K}(x_k) } + \frac{L}{2 \rho^2} \|G (x_k-x_{k-1})\|^2_{ \mathbf{K}(x_k)} \nonumber \\
   & \overset{ \eqref{als:an:16}}{\leq} f(x_k) -f^* - \frac{1}{2L} \|\nabla f(x_k)\|^2_{ \mathbf{K}(x_k) } +  \frac{L}{2 \rho} \|x_k-x_{k-1}\|^2_G - \frac{L}{2 \rho} \|x_k-x_{k-1}\|^2_{\mathbf{L}(x_k)} \nonumber \\
   & \overset{ \eqref{als:an:47} + \eqref{als:an:50}}{\leq} \mathcal{V}_k(\rho)  - \frac{ \xi(x_k)}{2L} \|\nabla f(x_k)\|^2_{\mathbf{H}^{\dagger}(x_k)}  - \frac{\xi(x_k) L}{2 \rho } \|x_k-x_{k-1}\|^2_G \nonumber \\
   &  \overset{ \eqref{als:an:10}}{\leq} \mathcal{V}_k(\rho)   - \frac{ \mu \xi(x_k)}{L} \left[f(x_k)-f^*\right]  - \frac{\xi(x_k) L}{2 \rho } \|x_k-x_{k-1}\|^2_G \nonumber \\
    &  \overset{ \eqref{als:an:39}}{\leq} \mathcal{V}_k(\rho)   - \frac{ \mu \xi }{L} \left[f(x_k)-f^*\right]  - \frac{\xi L}{2 \rho } \|x_k-x_{k-1}\|^2_G \nonumber \\
   & \leq (1-\theta) \ \mathcal{V}_k(\rho)   \nonumber \\
   & = \left(1- \xi \frac{ \mu }{L}\right) \ \mathcal{V}_k(\rho)   \nonumber \\
   & \leq \left(1- \xi \frac{ \mu }{L}\right)^{k} \ \mathcal{V}_1(\rho),
\end{align}}
where, $\theta = \xi \min \{1, \frac{ \mu }{L}\} = \xi \frac{ \mu }{L}$. This proves the Theorem.
\end{proof}


\section{Conclusion}
\label{als:an:sec:concl}

In this work, we proposed two variants of the Newton method for solving the unconstrained minimization problem. Our proposed approach incorporates the penalty method and augmented Lagrangian techniques to the Newton system. We provided convergence results of the proposed method under mild conditions imposed on the function $f(x)$. The proposed penalty Newton method generalizes the so-called \textit{Levenberg} and \textit{Levenberg-Marquardt} algorithm into one framework. The proposed algorithms outperform the existing Newton method on various test instances. Building on our work one can design penalty and augmented variants of the so-called \textit{Randomized Subspace Newton} (RSN) method proposed in \cite{gower2019rsn}.

\section*{Acknowledgements}
The Author would like thank Rober M. Gower for his thoughtful comments on an earlier version of the manuscript.

\bibliographystyle{siamplain}
\bibliography{references}

\end{document}


\maketitle

\section{A detailed example}

Here we include some equations and theorem-like environments to show
how these are labeled in a supplement and can be referenced from the
main text.
Consider the following equation:
\begin{equation}
  \label{eq:suppa}
  a^2 + b^2 = c^2.
\end{equation}
You can also reference equations such as \cref{eq:matrices,eq:bb} 
from the main article in this supplement.

\lipsum[100-101]

\begin{theorem}
  An example theorem.
\end{theorem}

\lipsum[102]
 
\begin{lemma}
  An example lemma.
\end{lemma}

\lipsum[103-105]

Here is an example citation: \cite{KoMa14}.

\section[Proof of Thm]{Proof of \cref{thm:bigthm}}
\label{sec:proof}

\lipsum[106-112]

\section{Additional experimental results}
\Cref{tab:foo} shows additional
supporting evidence. 

\begin{table}[htbp]
{\footnotesize
  \caption{Example table.}  \label{tab:foo}
\begin{center}
  \begin{tabular}{|c|c|c|} \hline
   Species & \bf Mean & \bf Std.~Dev. \\ \hline
    1 & 3.4 & 1.2 \\
    2 & 5.4 & 0.6 \\ \hline
  \end{tabular}
\end{center}
}
\end{table}

\bibliographystyle{siamplain}
\bibliography{references}